\documentclass[12pt, a4paper, leqno]{article}
\usepackage{Prem/_preamble}

\begin{document}
\title{Monochromatic $k$ in a row} 
\author{Kuo-Han Ku}
\date{\small Mathematics Division, National Center for Theoretical Sciences, National Taiwan University, Taipei 10617, Taiwan. Email: \href{mailto:kuohan_ku@ncts.ntu.edu.tw}{kuohan\_ku@ncts.ntu.edu.tw}.}

\maketitle
\setcounter{page}{1}
\setcounter{section}{0}
\thispagestyle{fancy}
\pagestyle{fancy}
\lhead{}
\chead{}
\rhead{}
\lfoot{}
\cfoot{\thepage}
\rfoot{}
\renewcommand{\headrulewidth}{0pt}
\renewcommand{\footrulewidth}{0pt}

\begin{abstract}
    We study a variant of the $k$-in-a-row game in which players alternatively claim positions until a $k$-in-a-row is created among all claimed positions. This leads to the constraint \emph{near $k$-in-a-row avoiding} on configurations and the associated problem of determining their extremal densities of such configurations on a given board $B$.

    We investigate this problem on two types of boards: the grid $\mathbb{Z}^2$ and hypercubes $[k]^d$. For the grid $\Z^2$, we establish nearly tight bounds on the maximum density $D(k,\mathbb{Z}^2)$, showing that $D(k,\mathbb{Z}^2)=1-\frac{2}{k}$ whenever $3\nmid k$, and determine both $D(3,\mathbb{Z}^2)$ and $d(3,\mathbb{Z}^2)$ exactly. We also bound the minimum density $d(k,\mathbb{Z}^2)$ up to a gap of $(8+o(1))k^{-1}$. For hypercubes $[k]^d$, we derive asymptotic bounds on $D(k,[k]^d)$ up to order $k^{-2}$ and obtain the exact value of $d(k,[k]^d)$. 
    Our results contrast with the classical no-$(k+1)$-in-line problem, a similar problem imposing different constraint, where the trivial upper bound is conjectured to be attainable.
\end{abstract}

\section{Introduction}

\emph{Five-in-a-row}, also known as ``Gomoku", is a strategy board game played on a $15\times 15$ grid. Two players take turns placing stones of their respective color on the board. The winner is the first to form a consecutive line of five or more stones of their color. Variants of five-in-a-row have been extensively studied, particularly in positional game theory. Using computer-assisted analysis, Allis~\cite{gomoku_bruteforce} showed that standard and free Gomoku are first-player wins. On the other hand, the $9$-in-a-row game played on the infinite grid $\mathbb{Z}^2$ is a draw; a drawing strategy for the second player is given in~\cite{9-in-a-row}.

Now imagine playing Five-in-a-row, but with two rules changed. First, all players place stones of the same color. Second, a player wins by occupying the last empty position of a five-in-a-row. We call this variant \emph{Monochromatic Five-in-a-row}.

Under these new rules, the nature of the game changes completely. Rather than attacking and blocking the opponent, players must avoid creating a five-in-a-row with exactly one empty position, which we call a \emph{near five-in-a-row}. Such a pattern appears only in the last two moves. If one player creates a near five-in-a-row, the other player completes it and wins.

This leads to the fundamental question: how long can such a game last? Clearly, the length of the game is capped by $\max_{S\in\mathcal{S}}|S|+2$, where $\mathcal{S}$ is the family of subsets forbidding near five-in-a-rows. More generally, we can consider the monochromatic $k$-in-a-row played on a given board $B$, where $k\in\N$, and define the family of saturated configurations $\mathcal{S}_k(B):=\max(\{S\subseteq B:|L\backslash S|\geq 2\ \forall \text{ $k$-in-a-row $L$}\})$\footnote{$\max(\mathcal{P})$ denotes the set of maximal elements in a poset $\mathcal{P}$.} for this game. Then the following question comes up.
\begin{question*}
    Given a board $B$ and an integer $k\in\N$, how does the spectrum of densities of the near $k$-in-a-row avoiding family $\mathcal{S}_k(B)$ behave? In particular, what can we say about the extrema of the spectrum?
\end{question*}

A closely related problem in Combinatorial Geometry is the ``No-$(k+1)$-in-line" problem, which asks for the maximum size of a subset of a grid (typically $[n]^2$ or $\mathbb{Z}^2$) containing no $(k+1)$ collinear points. The trivial upper bound for such subsets is $kn$, and it is conjectured that this bound can be attained for all $n\geq k$. In contrast, our results show that, for the problem considered here, the trivial upper bound, $1-\frac{2}{k}$, cannot be achieved in certain setting.

Remarkably, Grebennikov and Kwan~\cite{no-k+1-in-line} recently proved that the bound $kn$ is indeed achievable whenever $n\geq k\geq 10^{37}$. For small values of $k$, the conjecture has been verified in several cases~\cite{ANDERSON1979365, FLAMMENKAMP1992305, FLAMMENKAMP1998108}, while it remains open in general. In the case $k=2$, the problem reduces to the classical no-three-in-line problem. The best known lower bound, $(1.5+o(1))n$, comes from a construction of Hall, Jackson, Sudbery, and Wild~\cite{HALL1975336}. 

In this paper, two types of boards, \emph{the grid} $\Z^2$ and \emph{hypercubes} $[k]^d$, are concerned. Our main results give bounds on the extrema, $D(k,B)$ and $d(k,B)$, of density spectrum for these boards. The two cases require different techniques.

First, we look at the infinite grid $\Z^2$. As we cannot directly define such density on $\Z^2$, in order to approach $\Z^2$, we start from a finite grid board $B=[\ell]^2$ and take $\ell$ to infinity. Denote $D(k,\Z^2):=\lim\limits_{\ell\to\infty} D(k,[\ell]^2)$ and $d(k,\Z^2):=\lim\limits_{\ell\to\infty} d(k,[\ell]^2)$. Later, the existence and uniqueness of these limits will be proved. For $k=3$, both limits are determined.
\begin{theorem}\label{thm1} $D(3,\Z^2)=\frac{1}{5}$ and $d(3,\Z^2)=\frac{1}{17}.$
\end{theorem}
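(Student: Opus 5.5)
The proof rests on one reformulation. Reading a $3$-in-a-row as three consecutive cells in one of the four directions (horizontal, vertical, the two diagonals), a set $S$ avoids near $3$-in-a-rows iff no such triple contains two points of $S$. Equivalently, no two points of $S$ differ by a vector in
\[F=\{(a,0),(0,a),(a,a),(a,-a): a\in\{\pm1,\pm2\}\},\qquad |F|=16.\]
So $S$ is an independent set in the Cayley graph $G=\mathrm{Cay}(\Z^2,F)$, restricted to the board. Maximality of $S$ means that adding any empty cell $x$ creates a triple with two claimed points, i.e.\ $x-s\in F$ for some $s\in S$. Hence $\mathcal{S}_3([\ell]^2)$ is exactly the family of maximal independent sets, which are the independent dominating sets of $G[[\ell]^2]$. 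Both parts of Theorem~\ref{thm1} become packing and covering statements in $G$, and on $[\ell]^2$ boundary effects only change counts by $O(\ell)=o(\ell^2)$.

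\textbf{Maximum density.} For the upper bound, take the plus pentomino $P=\{(0,0),(\pm1,0),(0,\pm1)\}$. All pairwise differences of cells of $P$ are among $(1,0),(2,0),(1,1),(1,-1)$ and their negatives, which lie in $F$. So $P$ is a clique in $G$ and contains at most one point of $S$. Plus pentominoes tile $\Z^2$ (by the translates $P+\{(x,y):x+2y\equiv 0 \pmod 5\}$). Covering $[\ell]^2$ by the $\ell^2/5+O(\ell)$ tiles meeting it gives $|S|\le \ell^2/5+O(\ell)$.

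For the construction, take $S_0=\{(x,y):x+2y\equiv 0\pmod 5\}$. The elements of $F$ have residues $x+2y\equiv \pm1,\pm2,\pm4,\pm3,\pm1,\pm2$ (from $(1,0),(2,0),(0,1),(0,2),(1,1),(2,2)$), and $(1,-1),(2,-2)$ give $\mp1,\mp2$. None is $0$, so $S_0$ is independent. Restricting to $[\ell]^2$ and greedily extending to a maximal set keeps density $\ge 1/5-o(1)$. Hence $D(3,\Z^2)=1/5$.

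\textbf{Minimum density.} For the lower bound, each $s\in S$ dominates at most the $17$ cells of $s+(F\cup\{0\})$, and all of $[\ell]^2$ must be dominated. So $17|S|\ge \ell^2$, giving $d\ge 1/17$.

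For the matching construction I want a perfect code: a lattice $\Lambda$ of index $17$ such that $F\cup\{0\}$ is a complete residue system mod $\Lambda$. Try $\Lambda=\{(x,y): x+4y\equiv 0\pmod{17}\}$. The residues of $(1,0),(2,0),(0,1),(0,2),(1,1),(2,2),(1,-1),(2,-2)$ are $1,2,4,8,5,10,-3,-6$. Together with their negatives these are $\{1,\dots,16\}$, all distinct and nonzero. Thus $\Lambda$ is independent and dominating, i.e.\ saturated on $\Z^2$, with density $1/17$.

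On $[\ell]^2$ the restriction $\Lambda\cap[\ell]^2$ is still independent but may fail to dominate cells within distance $2$ of the boundary. I extend it greedily to a maximal independent set, adding only $O(\ell)$ points, so $d(3,[\ell]^2)\le 1/17+O(1/\ell)$.

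\textbf{Expected difficulty.} The main obstacle is finding the index-$17$ perfect code, i.e.\ choosing the multiplier $4$ so that all $16$ residues are distinct; the rest is routine. One should also check that the boundary corrections are genuinely $O(\ell)$ in both directions, so that the limits defining $D(3,\Z^2)$ and $d(3,\Z^2)$ equal $1/5$ and $1/17$.
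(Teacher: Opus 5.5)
Your proof is correct and is essentially the paper's. It uses the same mod-$5$ residue class for $D\ge 1/5$ (your $x+2y\equiv 0$ and the paper's $2x-y\equiv 1$ are translates of the same lattice). It uses the same count, that each point of $S$ controls at most $16$ other cells, for $d\ge 1/17$. For $d\le 1/17$ it uses the paper's $\mathcal{F}_k'$ covering at $k=3$: there the tile is exactly $\{0\}\cup F$, with $k^2+2k+2=17$ cells, the tiles no longer overlap, and the covering is (up to relabeling axes) your lattice $x+4y\equiv 0\pmod{17}$. The only real variation is the bound $D\le 1/5$. You partition the plane into plus-pentomino cliques, whereas the paper double counts orthogonal adjacencies between $S$ and its complement. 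That amounts to summing the same ``at most one point per plus'' bound over all $\ell^2$ translates rather than over a fixed tiling, and it needs no tiling.

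There are two small corrections. First, on $[\ell]^2$ your reformulation is not an exact equivalence. Near the corners, two cells differing by a vector of $F$ can lie on no $3$-line inside the board, e.g.\ $(1,2)$ and $(2,1)$ (the paper's ``rare structure''). So a configuration need not be independent in $\mathrm{Cay}(\Z^2,F)$, and a corner pentomino can contain two points of $S$. There are only four such pairs, so this costs an additive constant absorbed by your $O(\ell)$. The directions you rely on elsewhere (independent $\Rightarrow$ configuration, saturated $\Rightarrow$ dominating) do hold. Second, several of your residues of $x+2y$ are miscomputed: $(0,1)\mapsto 2$, $(0,2)\mapsto 4$, $(1,1)\mapsto 3$, $(2,2)\mapsto 1$. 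None of them is $0 \bmod 5$, so the independence of $S_0$ stands.
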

For general $k$, we obtain the following nearly tight bound on the upper limit,
\begin{theorem}\label{thm2}
    For $k\in\N_{\geq 2}$, we have $D(k,\Z^2)\in\left[1-\frac{2}{k-1},1-\frac{2}{k}\right]$. Further, if $k$ is not a multiple of $3$, it achieves upper bound, i.e., $D(k,\Z^2)=1-\frac{2}{k}$.
\end{theorem}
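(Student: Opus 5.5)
The plan is to prove the upper bound $D(k,\Z^2)\le 1-\frac2k$ by a counting argument along rows. The exact value for $3\nmid k$ will come from an explicit periodic construction of a near $k$-in-a-row avoiding set. The general lower bound $1-\frac{2}{k-1}$ then follows from a monotonicity argument in $k$. Throughout I use the existence of the limits $D(k,\Z^2)$ announced above. I also use that any near $k$-in-a-row avoiding set $S\subseteq[\ell]^2$ extends to a maximal one, so $D(k,[\ell]^2)$ equals the maximum density of an avoiding set; exhibiting a dense avoiding set is therefore enough for the lower bound.

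\emph{Upper bound.} Let $S\in\mathcal S_k([\ell]^2)$. Partition each row of $[\ell]^2$ into $\lfloor \ell/k\rfloor$ disjoint horizontal $k$-in-a-rows, plus fewer than $k$ leftover cells. Each such $k$-in-a-row $L$ satisfies $|L\setminus S|\ge 2$. Hence
\[
|S|\le \ell^2-2\ell\lfloor \ell/k\rfloor ,
\]
and dividing by $\ell^2$ and letting $\ell\to\infty$ gives $D(k,\Z^2)\le 1-\frac2k$.

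\emph{Construction for $3\nmid k$.} Take
\[
T=\{(x,y): x+ay \bmod k\in\{0,1\}\}
\]
and $S=\Z^2\setminus T$, which has density exactly $1-\frac2k$; restricting to $[\ell]^2$ loses nothing asymptotically. Along a line with direction $v\in\{(1,0),(0,1),(1,1),(1,-1)\}$, the linear form $f(x,y)=x+ay$ increases by a step $s\in\{1,a,1+a,1-a\}$. Hence $k$ consecutive cells give an arithmetic progression of length $k$ with step $s$ modulo $k$. Such a progression visits every residue of a fixed coset of $g\mathbb Z_k$ exactly $g$ times, where $g=\gcd(s,k)$.
\begin{itemize}
\item If $g=1$, the progression hits both $0$ and $1$, so $T$ meets the segment twice.
\item If $g=2$, every coset of $2\Z_k$ contains exactly one of $0,1$, and that residue is hit $2$ times.
\end{itemize}
So it suffices to choose $a$ with $\gcd(a-1,k),\gcd(a,k),\gcd(a+1,k)\in\{1,2\}$. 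I would do this by the Chinese remainder theorem:
\begin{itemize}
\item for each odd prime $p\mid k$, where $p\ge5$ since $3\nmid k$, take $a\equiv 2\pmod p$, so that $a-1,a,a+1\equiv1,2,3\not\equiv0$;
\item if $2^e\| k$ with $e\ge1$, take $a\equiv 2\pmod{2^{e}}$ (or just $a\equiv2\pmod 4$ when $e\ge2$), so $a\pm1$ are odd and $2\| a$.
\end{itemize}
This step is where $3\nmid k$ is essential: if $3\mid k$, one of $a-1,a,a+1$ is divisible by $3$, the corresponding direction has $g\ge3$, and some coset of $g\Z_k$ misses $\{0,1\}$ entirely. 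I expect this to be the main obstacle. It is not hard once set up, but it is the one genuinely nontrivial choice in the argument, and it explains why the theorem leaves a gap when $3\mid k$.

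\emph{Lower bound in general.} Any set $S$ that avoids near $(k-1)$-in-a-rows also avoids near $k$-in-a-rows, because every $k$-in-a-row contains a $(k-1)$-in-a-row that already misses $2$ points of $S$. Hence $D(k,\Z^2)\ge D(k-1,\Z^2)$. If $3\nmid k$ we are already done. If $3\mid k$, then $3\nmid k-1$, and the construction above gives $D(k-1,\Z^2)=1-\frac{2}{k-1}$ (for $k=3$ the bound $1-\frac{2}{2}=0$ is trivial). This yields $D(k,\Z^2)\in\left[1-\frac{2}{k-1},1-\frac2k\right]$ for all $k\ge2$.
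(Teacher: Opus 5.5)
Your proof is correct and follows the paper's argument essentially step by step: the same row-partition counting for the upper bound (the paper's Lemma~\ref{lem:D:upper bound} with $R$ the leftover cells), the same removal of two residue classes of a linear form including the parity argument when the step has $\gcd$ equal to $2$ with $k$, and the same monotonicity $D(k,[\ell]^2)\ge D(k-1,[\ell]^2)$ to handle $3\mid k$. The only difference is that your Chinese-remainder choice of $a$ is unnecessary, since $a=2$ already satisfies all of your congruence conditions (the steps are $1,2,3,-1$); this is exactly the paper's linear form $x_1+2x_2$.
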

and bounds on the lower limit with a gap $(8+o(1))k^{-1}$.
\begin{theorem}\label{thm3}We have the bounds on the lower limit
    \begin{align*}
        d(k,\Z^2)\in [1-16k^{-1}-o(k^{-1}), 1-8k^{-1}+o(k^{-1})]
    \end{align*}
as $k$ grows.
\end{theorem}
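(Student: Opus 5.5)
The plan is to treat the two bounds separately. For the lower bound on $d(k,\Z^2)$ I will use a double-counting argument over the four line directions. For the upper bound I will give an explicit periodic saturated configuration. Throughout I work with the set of empty cells $E=[\ell]^2\setminus S$, since both bounds amount to estimating $|E|/\ell^2$, up to boundary effects that vanish as $\ell\to\infty$ (assuming the existence of the limits proved later in the paper).

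\textbf{Lower bound $d\ge 1-16k^{-1}-o(k^{-1})$.} Let $S\in\mathcal{S}_k([\ell]^2)$. Maximality says that for every $p\in E$ the set $S\cup\{p\}$ violates the constraint. Hence there is a $k$-in-a-row $L\ni p$ with $|L\setminus S|=2$, say $L\setminus S=\{p,q\}$. Call $p$ \emph{witnessed in direction} $\delta$ if such an $L$ lies along $\delta$; here $\delta$ ranges over horizontal, vertical and the two diagonals. Every empty cell is witnessed in at least one direction, so $|E|\le\sum_\delta W_\delta$, where $W_\delta$ is the number of cells witnessed in direction $\delta$.

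Fix a line of direction $\delta$, of length $n$. List its empty cells as $e_1<e_2<\dots$ with gaps $g_j=e_{j+1}-e_j$. The cell $e_i$ is witnessed via the pair $(e_i,e_{i+1})$ exactly when some window of length $k$ contains both cells and avoids $e_{i-1}$ and $e_{i+2}$, i.e.
\[ g_{i-1}+g_i+g_{i+1}\ \ge\ k+1 . \]
The key one-dimensional lemma I would prove is that the number of empties on the line lying in some such pair is at most $(4+o(1))n/k$. Taken alone, the triple condition only gives about $6n/k$. To reach $4n/k$ I plan a discharging argument: charge each witnessing pair $(e_i,e_{i+1})$ to its two outer gaps $g_{i-1},g_{i+1}$. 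Then show that every gap receives total charge at most $2/k+o(1/k)$ per unit of length. The point is that a gap adjacent to two witnessing pairs forces the pattern large, small, large, and the extremal pattern is gaps $\approx k/2,1,k/2,1,\dots$, which indeed has density $4/k$. Summing over the four directions then gives $|E|\le(16+o(1))\ell^2/k$. I expect this sharp $4/k$ per-line lemma, and in particular handling long runs of consecutive overlapping witnessing pairs, to be the main technical obstacle.

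\textbf{Upper bound $d\le 1-8k^{-1}+o(k^{-1})$.} I will build a doubly periodic $E$ of density $(8+o(1))/k$ satisfying two conditions. First, every $k$-window in each of the four directions meets $E$ at least twice, so $S$ is admissible. Second, every $e\in E$ has, in some direction, a window containing $e$ and exactly one other empty cell, which gives maximality. This is equivalent to $S$ being saturated. Since admissibility already forces each line in each direction to carry empties with density about $2/k$, the target $8/k$ means each empty cell should serve mainly one direction. My first attempt is $E$ as a union of four lattice translates of domino-like pairs, one family per direction. Family $\delta$ places adjacent empty pairs along $\delta$ at spacing about $k$, arranged so the other families never put a third empty into a witnessing window. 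The periods are chosen via a small sumset/congruence condition, with rounding of $k$ absorbed in the $o(k^{-1})$ term. Finally I would verify the window conditions residue-class by residue-class.
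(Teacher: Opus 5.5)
\paragraph{The upper bound has a genuine gap.}
The bound $d(k,\mathbb{Z}^2)\le 1-8k^{-1}+o(k^{-1})$ is a pure existence statement, and your proposal never produces a configuration. You give only a ``first attempt'' with unspecified lattices, offsets and congruence conditions, and check neither admissibility nor saturation.

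The design as described also runs into a concrete obstruction. Suppose a second family, say vertical dominoes, is a lattice translate with a domino every at most $k$ cells along each column, so that it alone secures vertical admissibility. Then in any row it meets, its empty cells are periodic with period at most $k$. Every horizontal $k$-window in that row therefore contains one of them, and no horizontal domino in that row can be witnessed. To escape this, the other families would have to vacate entire rows (and symmetrically columns and diagonals), i.e.\ cluster into bands. That is a different structure from the one you describe.

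Your guiding heuristic also points the wrong way. With many empty cells, admissibility is cheap. The binding constraint is witnessing, which your own one-dimensional lemma caps at about $\frac4k$ per direction.

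\paragraph{What is missing is the paper's block construction.}
Tile the plane with period $k+2$ by $\mathcal{F}_k$: the filled square $[3,k]^2$ inside $[k+2]^2$, plus $8$ points next to the tile corners. The filled $(k-2)\times(k-2)$ squares are then separated by empty bands of width $4$.
\begin{itemize}
\item A band cell beside a square is witnessed by the axis-parallel $k$-window made of it, its band neighbour, and a full row or column of the adjacent square.
\item So every row and column through squares follows the pattern ($k-2$ filled, $4$ empty). This attains your one-dimensional maximum $\frac{4}{k+2}$ in two directions at once, rather than spreading the witnessing over four directions.
\item In each $2\times 2$ tile corner, the two cells on the tile's diagonals are witnessed diagonally through the square. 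The other two cells are filled, which is what makes the pattern saturated.
\end{itemize}
The empty density is $\frac{8k-8}{(k+2)^2}=\frac8k-O(k^{-2})$, which gives the upper bound for $k\ge 4$.

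\paragraph{The lower bound is a valid alternative route, after one repair.}
The paper uses a bipartite graph between empty and filled cells.
\begin{itemize}
\item Each empty cell $x$ is joined to the $k-2$ filled cells of a near $k$-line through $x$.
\item A filled cell $y$ can be joined only to the first two empty cells on each of the eight rays from $y$.
\end{itemize}
Hence $(k-2)|N|\le 16|S|$ and $d\ge\frac{k-2}{k+14}$. Your per-line lemma gives $1-\frac{16}{k+1}$, which agrees to first order. It also explains where the $16$ comes from: $4$ per direction.

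The repair concerns your charging scheme. Charging witnessing pairs to their outer gaps at rate $\frac2k$ amounts to bounding the number of witnessing \emph{pairs} by about $\frac{2n}{k}$, and that is false. If all gaps are about $\frac{k+1}{3}$, every pair witnesses, giving about $\frac{3n}{k}$ pairs. Only $\frac{3n}{k}$ empties are witnessed there, because consecutive pairs share cells, so you must count empties, not pairs.

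To do this, group witnessing pairs into maximal runs of consecutive indices. A run of $r$ pairs covers $r+1$ empties. Give its inner gaps weight $1$ and its two outer gaps weight $\frac12$; then every gap has total weight at most $1$. The triple conditions give weighted length at least:
\begin{itemize}
\item $\frac{k+2}{2}$ for $r=1$;
\item $k+1$ for $r=2$, by adding the two end conditions;
\item $\frac{r(k+1)}{3}$ for $r\ge 3$.
\end{itemize}
In every case $r+1\le\frac{4}{k+1}$ times the weighted length, which is the lemma you need.
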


For the second type, the minimum density is known. For the maximum density, we study the extremal behaviour as $k$ goes to infinity while $d$ fixed, and derive asymptotic bounds on the maximum density to the second order of $k^{-1}$.

\begin{theorem}\label{thm4}
For $d\in\N$, we have $D(k,[k]^d)=1-2k^{-1}-O(k^{-2})$ as $k$ grows.
\end{theorem}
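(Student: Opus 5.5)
We prove the two inequalities separately. Throughout, the $k$-in-a-rows in $[k]^d$ are the combinatorial lines $L=\{x(t):t\in[k]\}$. Such a line is given by a nonempty set $J\subseteq[d]$ of moving coordinates and a split $J=J^+\sqcup J^-$. For $i\in J^+$ we set $x_i(t)=t$, for $i\in J^-$ we set $x_i(t)=k+1-t$, and the coordinates outside $J$ are fixed.

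\textbf{Maximal sets versus admissible sets.} $D(k,[k]^d)$ is the largest density $|S|/k^d$ over $S\in\mathcal{S}_k([k]^d)$. We first compare this with arbitrary admissible sets, i.e.\ sets $S\subseteq[k]^d$ with $|L\setminus S|\ge 2$ for every line $L$. Every admissible set lies in some maximal one, since $[k]^d$ is finite. Hence for any admissible $S_0$ there is an $S\in\mathcal{S}_k([k]^d)$ with $|S|\ge|S_0|$. Conversely, every member of $\mathcal{S}_k([k]^d)$ is itself admissible. So the maximum over $\mathcal{S}_k([k]^d)$ equals the maximum over all admissible sets. It therefore suffices to prove the bounds for admissible sets, and for the lower bound we will pass to a maximal superset at the end.

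\textbf{Upper bound.} Consider the lines with $J=J^+=\{1\}$. For each choice of $(x_2,\dots,x_d)$ there is exactly one such line, so these $k^{d-1}$ lines partition $[k]^d$. A maximal $S$ misses at least two points of each of them. Hence
\[
|S|\le k^d-2k^{d-1},
\]
so $D(k,[k]^d)\le 1-2k^{-1}$.

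\textbf{Lower bound.} We construct a set $T$ that meets every line in at least two points and satisfies $|T|\le 2k^{d-1}+O_d(k^{d-2})$. The construction has two parts.

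\begin{enumerate}
\item \emph{Residue part.} Let $T_0=\{x:\sum_i x_i\equiv 0 \text{ or } 1 \pmod k\}$. For fixed $(x_2,\dots,x_d)$ exactly two values of $x_1$ qualify, so $|T_0|=2k^{d-1}$. Along a line, $\sum_i x_i(t)=c+(|J^+|-|J^-|)t$ for some constant $c$. If $|J|=1$ the slope is $\pm1$, so as $t$ runs over $[k]$ the sum runs over all residues mod $k$ exactly once. Thus $T_0$ meets every line with $|J|=1$ in exactly two points.
\item \emph{Correction part.} For a fixed pair $(J^+,J^-)$ with $|J|=j\ge 2$, the line is determined by the $d-j$ fixed coordinates, so there are $k^{d-j}\le k^{d-2}$ such lines. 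There are at most $3^d$ pairs $(J^+,J^-)$. Let $T_1$ consist of two arbitrarily chosen points on each line with $|J|\ge2$. Then $|T_1|\le 2\cdot 3^d k^{d-2}$.
\end{enumerate}

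Put $T=T_0\cup T_1$. Then $T$ meets every line in at least two points. Consequently $S_0=[k]^d\setminus T$ is admissible, with
\[
|S_0|\ge k^d-2k^{d-1}-2\cdot3^d k^{d-2}.
\]
Extend $S_0$ to a maximal $S\in\mathcal{S}_k([k]^d)$; this only increases the size. Since $d$ is fixed, we get $D(k,[k]^d)\ge 1-2k^{-1}-2\cdot 3^d k^{-2}$. Together with the upper bound this gives $D(k,[k]^d)=1-2k^{-1}-O(k^{-2})$.

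The only step needing care is the choice of $T_0$. A plain residue class of the coordinate sum fails on lines whose slope $|J^+|-|J^-|$ is zero or not coprime to $k$. The fix is to note that every such line has $|J|\ge2$, and there are only $O(k^{d-2})$ lines with $|J|\ge 2$, so patching them one by one costs only the $O(k^{-2})$ error term.
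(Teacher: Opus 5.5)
Your proof is correct. The upper bound is exactly the paper's: partition $[k]^d$ into axis-parallel $k$-lines, as in Lemma~\ref{lem:D:upper bound}. Your lower bound, however, takes a genuinely different and more elementary route.

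\textbf{The paper's lower bound.} The paper never patches individual lines. It picks $\ell\le k/2$ coprime to every $i\in[d]$ and removes a residue class of the coordinate sum modulo $\ell$ from the block $[0,\ell-1]^d$. It then places reflected and truncated copies of this block at the points of the core $[\ell,k-\ell+1]^d$. This is arranged so that every $k$-line, whatever its direction, runs completely through two blocks. Inside each such block, after reflection, the step of the coordinate sum is $\|\vec d\|_1\in[d]$, which is coprime to $\ell$. So the line loses exactly one point in each of the two blocks, giving density at least $1-1/\ell$.

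\textbf{Your lower bound.} You instead take residues modulo $k$ itself. These only control the slope-$\pm1$ (axis-parallel) lines. You absorb the failure on all other directions by spending two points on each of the $O_d(k^{d-2})$ lines with at least two moving coordinates.

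\textbf{Comparison.} Your argument is shorter. Its constant $2\cdot 3^d$ also grows much more slowly in $d$ than the paper's stated $C(d)=8\,d!$. What the paper's construction buys is exactness in special cases. When $k$ is even and $k/2$ is coprime to $d!$, one can take $\ell=k/2$ and get density exactly $1-\frac{2}{k}$; this is the evidence behind Conjecture~\ref{conj:[k]^d}. The bound your patching argument certifies is always $2\cdot 3^d k^{-2}$ below $1-\frac{2}{k}$, so it cannot detect such cases.
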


\begin{theorem}\label{thm5}
For $k\in\N_{\geq 2}$ and $d\in\N$, we have $d(k,[k]^d)=\left(1-\frac{2}{k}\right)^{d}$.
\end{theorem}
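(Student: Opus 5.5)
The plan is to prove matching bounds: an explicit saturated configuration of size $(k-2)^d$, and a proof that every $S\in\mathcal{S}_k([k]^d)$ has $|S|\geq (k-2)^d$. Throughout, a $k$-in-a-row in $[k]^d$ is a set of $k$ points in which each coordinate is constant, increases $1,\dots,k$, or decreases $k,\dots,1$, and at least one coordinate is non-constant.

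\textbf{Construction.} Take $S_0=[2,k-1]^d$, which has density $(1-\frac{2}{k})^d$.
\begin{itemize}
\item \emph{Near $k$-in-a-row avoiding.} Every line has a moving coordinate, which takes the values $1$ and $k$ at the two endpoints. Hence both endpoints lie outside $S_0$, and $|L\setminus S_0|\ge 2$.
\item \emph{Maximality.} Let $x\notin S_0$, and let $I\neq\emptyset$ be the set of coordinates where $x_i\in\{1,k\}$. Take the line $L$ that moves exactly in the coordinates of $I$: coordinate $i$ increases if $x_i=1$ and decreases if $x_i=k$, and the coordinates outside $I$ are frozen at their interior values $x_j\in[2,k-1]$. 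Then $x$ is an endpoint of $L$. Every non-endpoint of $L$ has all coordinates in $[2,k-1]$, so it lies in $S_0$. Therefore $L\setminus(S_0\cup\{x\})$ is the single antipodal endpoint, and $S_0\cup\{x\}$ contains a near $k$-in-a-row.
\end{itemize}
So $S_0\in\mathcal{S}_k([k]^d)$, which gives $d(k,[k]^d)\le (1-\frac2k)^d$.

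\textbf{Lower bound: structure of the complement.} Let $S$ be saturated and $T=[k]^d\setminus S$. For every $x\in T$, maximality gives a line $L_x\ni x$ with $|L_x\setminus(S\cup\{x\})|\le1$. Combined with $|L_x\setminus S|\ge 2$, this forces $L_x\setminus S=\{x,y_x\}$ exactly. So every point of $T$ is "witnessed" by a line containing exactly $k-2$ points of $S$. The case $d=1$ is immediate: if $|T|\ge3$ one could add a point of $T$ and stay avoiding, so $|T|=2$ and $|S|=k-2$.

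\textbf{Lower bound: induction on $d$ (the main obstacle).} For general $d$ I would induct on $d$ by slicing $[k]^d$ into the $k$ layers $\{x_1=c\}$. The difficulty is that $S$ restricted to a layer is near-avoiding there but need not be maximal in it. My first attempt is to show that each layer $c$ contains a saturated $(d-1)$-dimensional configuration $S'_c\subseteq S\cap\{x_1=c\}$ plus points compensating for non-maximality. More precisely, for $x\in T$ in layer $c$ whose witness line $L_x$ leaves the layer, I would charge $x$ to the $k-2$ points of $S$ on $L_x$. The aim is to show that the "interior" layers $c\in[2,k-1]$ each receive at least $(k-2)^{d-1}$ points of $S$ under this accounting, which yields $(k-2)\cdot(k-2)^{d-1}$.

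If the layer induction proves too lossy, the fallback is a weighting argument. I would find nonnegative weights on lines, summing to a suitable total, such that every point $p$ is covered with total weight $w(p)$ satisfying $\sum_{p\in S}w(p)\ge(k-2)^d$ for every saturated $S$. The tool for this is an LP/duality argument using the fact that each $x\in T$ has its witness line $L_x$ with only two missing points. This lower bound is the step I expect to be hardest; the construction above is routine.
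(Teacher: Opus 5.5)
Your construction $[2,k-1]^d$ and its saturation check are correct and match the paper. Your observation that every $x\notin S$ has a witness line $L_x$ with $L_x\setminus S=\{x,y_x\}$ is also the right starting point. However, the lower bound $|S|\ge (k-2)^d$, which is the real content of the theorem, is not proved: the layer induction and the LP fallback are stated only as aims.

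The layer plan cannot work in the form you state it. For $k\ge 4$, $W=([k]\setminus\{2,k-1\})^d$ is saturated of size $(k-2)^d$. Every line misses exactly its second and $(k-1)$-st points, and every $x\notin W$ is the second point of a line whose only other missing point is its $(k-1)$-st. Yet the interior layers $\{x_1=2\}$ and $\{x_1=k-1\}$ of $W$ are empty. So those layers do not contain a saturated $(d-1)$-dimensional configuration, since $\emptyset$ is not saturated. Nor do they carry $(k-2)^{d-1}$ points. Any layer-wise statement would need a genuinely cross-layer charging rule, which the proposal does not define. The LP fallback restates the goal: saturation is an existential condition (for each $x\notin S$ \emph{some} line has only two missing points), not a linear constraint. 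An unspecified weighting of lines therefore gives no bound.

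The missing idea is a direct exchange argument against $I:=[2,k-1]^d$, resting on two facts about lines.
\begin{enumerate}
\item[(a)] A line meeting $I$ has exactly its two endpoints outside $I$, because a constant coordinate equal to $1$ or $k$ would make the whole line miss $I$.
\item[(b)] Each $z\notin I$ lies on exactly one line meeting $I$, namely $z+[0,k-1]\vec d_z$ with $\vec d_z=\sum_{z_i=1}e_i-\sum_{z_i=k}e_i$, the same line you used for maximality. By (a), $z$ must be an endpoint of any such line, and then each coordinate's behaviour is forced.
\end{enumerate}

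Now take $x\in I\setminus S$. Its witness line $L_x$ meets $I$ and $x$ is not an endpoint. Hence $L_x\setminus S=\{x,y_x\}$ puts at least one endpoint of $L_x$ into $S\setminus I$, and both endpoints if $y_x\in I$. Send $x$ to such an endpoint, choosing the endpoint on $x$'s side when $y_x\in I$.

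This map is injective. By (b), two points sent to the same endpoint must have the same witness line. They are then $x$ and $y_x$, both interior, and were sent to opposite endpoints. The injection $I\setminus S\hookrightarrow S\setminus I$ gives $|S|=|S\cap I|+|S\setminus I|\ge |S\cap I|+|I\setminus S|=(k-2)^d$, with no induction on $d$. This is the paper's proof.
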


This paper is organized as follows: Section \ref{section2} introduces the necessary definitions and propositions. We discuss the grid in Section \ref{section3}, and hypercubes in Section \ref{section4}. Section \ref{section5} contains conjectures, variants, and possible directions for future research.

\section{Preliminaries}\label{section2}

In this section, we introduce the basic notions used throughout the paper, including $k$-boards, $k$-lines, and near $k$-lines, and describe several common examples. We then define saturated configurations and extremal densities formally. Finally, we prove two important propositions concerning the infinite grid $\mathbb{Z}^2$.

\begin{definition} Let $k\in\N_{\geq 3}$. Given a set $B$ and a $k$-uniform subset family $\mathcal{L}\subseteq \binom{B}{k}$. The pair $(B,\mathcal{L})$ is called a \emph{$k$-board}. A subset $S\in \mathcal{L}$ is called a \emph{$k$-line}. A subset $S'\subseteq B$ is called a \emph{near $k$-line} if $|S'|=k-1$ and $S'\subseteq S$ for some $k$-line $S$.
\end{definition}

In the following we focus on grid-shaped boards that arise from extensions of the classic Five-in-a-row board, in which all $k$-lines are either axis-parallel or diagonal. As long as the set of $k$-lines considered here is totally determined by the ground set, we will simply use the ground set $B$ to represent the corresponding $k$-board $(B,\mathcal{L})$.

\begin{definition}
    Let $k,d\in\N$. The set $\Z^d$ along with the following $k$-uniform subset family
    \begin{align*}
        \left\{p+[0,k-1]\Vec{d}\ :\ p\in\Z^d,\ \Vec{d}\in \{-1,0,1\}^d\backslash\{0\}\right\}
    \end{align*}
    is called the \emph{infinite $k$-board of dimension $d$}. For $\ell=(\ell_1,\ell_2,\cdots,\ell_d)\in \N^d$, consider the hyper-rectangle $B:=\prod_{i=1}^d [\ell_i]$ and the family 
    \begin{align*}
        \mathcal{L}:=\left\{p+[0,k-1]\Vec{d}\ :\ p\in B,\ \Vec{d}\in\{-1,0,1\}^d\backslash\{0\},\ p+[0,k-1]\Vec{d}\subseteq B\right\},
    \end{align*}
    the board $(B,\mathcal{L})$ is called the \emph{(standard) $k$-board of shape $\ell$}. Further, consider the torus $B':=\prod_{i=1}^d (\Z/\ell_i\Z)$ and the family
    \begin{align*}
        \mathcal{L}':=\left\{p+[0,k-1]\Vec{d}\ :\ p\in B',\ \Vec{d}\in\{-1,0,1\}^d\backslash\{0\}\right\},
    \end{align*}
    we call $(B',\mathcal{L}')$ the \emph{toric $k$-board of shape $\ell$}.
\end{definition}

\begin{example} The standard $5$-board of shape $(19,19)$ corresponds to the board of the classical Five-in-a-row game.
\end{example}

\begin{definition}
    Given a $k$-board $(B,\mathcal{L})$. A subset $S\subseteq B$ is called a \emph{configuration} if it contains no near $k$-line. The set of all configurations is denoted by $\mathcal{C}_k(B)$. Further, a configuration $S$ is called \emph{saturated} if it is maximal in $\mathcal{C}_k(B)$ with respect to inclusion. We denote by $\mathcal{S}_k(B)$ the family of all saturated configurations, i.e.,
    \begin{align*}
        \mathcal{S}_k(B):=\max(\{S\subseteq B: |L\backslash S|\geq 2\ \forall L\in \mathcal{L}(B)\}).
    \end{align*}
\end{definition}

\begin{example}
    On the board $B=[k]^d$, take $k':=k-2$, one can find that every line $L$ with $|B\cap L|\geq k'+1$ is either axis-parallel or diagonal. Thus the condition ``No-$(k'+1)$-in-line" agrees with near $k$-line avoiding on this board.
\end{example}

One can verify that every saturated configuration corresponds to a game state right before the final two moves. Thus bounds on size of saturated configurations translate directly into bounds on the possible game lengths, up to an additive constant of $2$. In the viewpoint of density, they are almost the same when the board size goes to infinity. 

\begin{definition}
    Given a finite $k$-board $B$, the set
    \begin{align*}
        \mathcal{D}_k(B):=\left\{\frac{|S|}{|B|}:S\in\mathcal{S}_k(B)\right\}
    \end{align*}
    is called the \emph{density spectrum} of $B$. We also define $D(k,B):=\max\mathcal{D}_k(B)$ and $d(k,B):=\min\mathcal{D}_k(B)$.
\end{definition}

To define the extremal density for infinite grid board, we use the limit of that of finite grid boards.
\begin{definition}
    For $k\in \N$, we define
    \begin{align*}
        D(k,\Z^2):=\lim_{t\to \infty} D(k,[m(t)]\times [n(t)])\ \ \ \text{ and }\ \ \ d(k,\Z^2):=\lim_{t\to \infty} d(k,[m(t)]\times [n(t)]),
    \end{align*}
    where $m,n:\N\to\N$ are two arbitrary maps satisfying $m(t),n(t)\in\omega(1)$.
\end{definition}

The following two propositions provide the existence and well-definedness of these limits.

\begin{proposition}\label{limit_existence}
    Given two maps $m,n:\N\to\N$. If $m(t),n(t)\in \omega(1)$, then the limit of two sequences $\{D(k,[m(t)]\times [n(t)])\}_{t=1}^\infty$ and $\{d(k,[m(t)]\times [n(t)])\}_{t=1}^\infty$ exist.
\end{proposition}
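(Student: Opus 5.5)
The plan is to compare every large rectangle $[m]\times[n]$ with a fixed rectangle $[a]\times[b]$ through tiling. From this I would derive two-sided estimates of the form
\begin{align*}
    |D(k,[m]\times[n])-D(k,[a]\times[b])|\le C_k\left(\tfrac1a+\tfrac1b+\tfrac am+\tfrac bn\right),
\end{align*}
and the same with $d$ in place of $D$. Once these hold, take $\limsup$ and $\liminf$ along $t\to\infty$ with $a,b$ fixed; since $m(t),n(t)\in\omega(1)$, the terms $a/m$ and $b/n$ vanish. The two limits then differ by at most $2C_k(1/a+1/b)$, and letting $a,b\to\infty$ shows that the limit exists. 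In fact the same argument shows the limit is independent of the choice of $m,n$. Throughout, we partition $[m]\times[n]$ into $\lfloor m/a\rfloor\lfloor n/b\rfloor$ translated $a\times b$ tiles plus a leftover region of size $O(an+bm)$; the leftover contributes only the $a/m+b/n$ error.

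\emph{Restriction direction.} Let $S$ be an extremal saturated configuration on the big board. For each tile $R$, the set $S\cap R$ contains no near $k$-line of $R$, because every $k$-line of $R$ is a $k$-line of the big board. Extend $S\cap R$ to a saturated configuration $S_R$ of $R$.
\begin{itemize}
    \item For $D$: we have $|S\cap R|\le|S_R|\le D(k,[a]\times[b])ab$. Summing over tiles gives the upper bound on $D(k,[m]\times[n])$.
    \item For $d$: I claim $S_R\setminus S$ lies within distance $k-1$ of $\partial R$. Indeed, if $x\in R$ is at distance $\ge k$ from the boundary and $x\notin S$, then saturation of $S$ gives a $k$-line $L\ni x$ with $|L\setminus(S\cup\{x\})|\le 1$. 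This $L$ lies within distance $k-1$ of $x$, so $L\subseteq R$, and hence $x$ is already blocked in $R$. Therefore $|S\cap R|\ge d(k,[a]\times[b])ab-O(k(a+b))$, which after summing gives the lower bound on $d(k,[m]\times[n])$.
\end{itemize}

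\emph{Tiling direction.} Take an extremal saturated configuration $T$ of $[a]\times[b]$ and delete its cells within distance $1$ of the tile boundary, obtaining $T'$. Place a copy of $T'$ in every tile of the big board, and let $U$ be the union of the copies. I would check that $U$ is a configuration of the big board by considering each $k$-line $L$:
\begin{itemize}
    \item if $L$ lies inside one tile, it is handled by $T$;
    \item if $L$ runs along the emptied frame, it is entirely empty;
    \item if $L$ crosses from one tile into another, it meets the width-$2$ empty frame in at least $2$ cells;
    \item if $L$ leaves a tile and ends inside the frame, it still contains at least $2$ empty frame cells, counting the deleted boundary cell of the tile it starts in.
\end{itemize}
This short case check is the only fiddly point.

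For $D$, extend $U$ to a saturated configuration of the big board. This gives $D(k,[m]\times[n])\ge D(k,[a]\times[b])-O(1/a+1/b)-O(a/m+b/n)$. For $d$, extending $U$ to a saturation $S$ can only add cells that are not blocked. Any cell of a tile at distance $\ge k$ from its boundary that lies outside $T$ was blocked in $T$ by a $k$-line inside the tile. That line lies away from the deleted frame, so $T'$ agrees with $T$ on it, and the block persists under enlargement. Hence the only cells added lie in $O(k)$-neighbourhoods of tile boundaries or in the leftover region, which gives $d(k,[m]\times[n])\le d(k,[a]\times[b])+O(k/a+k/b)+O(a/m+b/n)$.

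I expect the main obstacle to be the $d$ direction. Both saturation arguments rely on the locality fact that a cell is blocked by a $k$-line within distance $k-1$ of it, and that a block persists when the configuration grows. The argument needs this fact to be correct; otherwise saturating the glued configuration could add a positive density of cells.
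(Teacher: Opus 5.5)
Your proof is correct, but it is organized differently from the paper's.

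\textbf{The paper's route.} It uses only your tiling direction and never compares with a fixed reference board. It sets $L_{\text{high}}:=\limsup_t D(t)$ and $L_{\text{low}}:=\liminf_t d(t)$. It then picks an index $N_1$ at which $D(N_1)$ (resp.\ $d(N_1)$) is within $\varepsilon$ of this value, and tiles an extremal configuration of $[m(N_1)]\times[n(N_1)]$ into every sufficiently later board. This gives eventually $D(t)\geq L_{\text{high}}-\varepsilon$ (resp.\ $d(t)\leq L_{\text{low}}+\varepsilon$), hence $\liminf_t D(t)=\limsup_t D(t)$ and $\limsup_t d(t)=\liminf_t d(t)$, so no restriction step is needed.

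\textbf{Separating the copies.} The paper separates its copies by two empty rows and columns. This forces it to delete one point from each corner-cutting diagonal ``danger line'' of length $k-1$, since such a line becomes a near $k$-line once extended by a single gap cell. Your deletion of each tile's outer layer handles these lines automatically: any $k$-line leaving a tile contains that tile's emptied boundary cell as well as a cell outside it.

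\textbf{What your route buys.} Your reference rectangle $[a]\times[b]$ is arbitrary rather than drawn from the sequence. So your two-sided estimate is a Cauchy criterion and identifies the limit as $\lim_{a,b\to\infty}D(k,[a]\times[b])$ (and likewise for $d$). This proves Proposition~\ref{limit_uniqueness} at the same time, whereas the paper needs a separate interleaving argument for it. The price is the extra restriction step, whose $d$-half needs the same locality fact as the tiling step.

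\textbf{The locality argument.} Your argument that cells at distance $\geq k$ from a tile boundary stay blocked as the configuration grows is more careful than the paper's assertion that each copy of $X\setminus D$ remains saturated in its box. Strictly speaking, that assertion overlooks cells whose only blocking line passed through a deleted danger point. There are only $O(k)$ such cells per copy, so the paper's bound survives, but your version needs no such patch.
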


\begin{proposition}\label{limit_uniqueness}
    For $k\in\N$, there are two reals $L_{\text{high}}, L_{\text{low}}\in (0,1)$ such that
    \begin{align*}
        \lim_{t\to\infty} D(k,[m(t)]\times [n(t)])=L_{\text{high}}\ \ \ \text{ and }\ \ \ 
        \lim_{t\to\infty} d(k,[m(t)]\times [n(t)])=L_{\text{low}}
    \end{align*}
    hold for every maps $m,n:\N\to\N$ satisfying $m(t),n(t)\in \omega(1)$.
\end{proposition}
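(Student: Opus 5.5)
The plan is to show that for every $\varepsilon>0$ there is $N$ such that any two rectangles $[m]\times[n]$ and $[m']\times[n']$ with all sides at least $N$ have values of $D(k,\cdot)$ within $\varepsilon$ of each other, and likewise for $d(k,\cdot)$. This Cauchy-type property gives the limits for every admissible pair $(m,n)$, and shows that the limit does not depend on the choice of maps. The value $L_{\text{high}}$ is then the common limit, and similarly $L_{\text{low}}$. Positivity and the bound below $1$ will be checked separately at the end.

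The key tool is a tiling and patching argument. Fix a large rectangle $R=[a]\times[b]$. Tile a much larger rectangle $[m]\times[n]$ by copies of $R$ separated by "buffer" strips of width about $k$, and handle the leftover boundary strips of width less than $a+k$ or $b+k$. These pieces cover a fraction $O(k/a+k/b+a/m+b/n)$ of the area.

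For the maximum, the lower-bound direction goes as follows. Place an optimal configuration of $R$ in each copy and leave the buffers empty. A $k$-line meeting two copies must pass through a buffer of width at least $2$ along its direction, so it misses at least two cells and contains no near $k$-line. Lines inside one copy are lines of $R$. The union is therefore a configuration. Extend it greedily to a saturated configuration, which only increases its size. This gives $D(k,[m]\times[n])\ge D(k,R)-O(k/a+k/b+a/m+b/n)$.

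The upper-bound direction runs the other way. Restrict an optimal saturated configuration of the big board to a sub-rectangle $R$. It is still a configuration of $R$, but it need not be saturated. Still, any configuration extends to a saturated one of at least the same size, so its density is at most $D(k,R)$. Averaging over a tiling of the big board by copies of $R$ then gives $D(k,[m]\times[n])\le D(k,R)+O(a/m+b/n)$. Together these make $D$ almost monotone and Cauchy, with the limit equal to $\limsup$ and $\liminf$ independent of the maps.

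For the minimum $d$, the naive argument fails, because the restriction of a saturated set need not be saturated and empty buffers destroy saturation. So I expect the minimum to be the main obstacle.

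My plan for the minimum is as follows.
\begin{itemize}
\item[(i)] Upper bound on the big board. Tile it with copies of a minimal saturated configuration of $R$, with buffers of width about $2k$. Fill the buffers with a fixed periodic saturated pattern, or simply take any saturated extension. Saturation inside each copy forces every cell there either to be occupied or to complete a near line using only cells of $R$. Hence any saturated extension adds cells only within distance $k$ of the copy boundaries, which is an area fraction $O(k/a+k/b)$.
\item[(ii)] Lower bound on $R$ from the big board. Take a minimal saturated configuration $S$ of the big board and restrict it to a copy of $R$. Every cell of the copy at distance at least $k$ from its boundary is blocked by lines contained in the copy. So $S\cap R$ together with a saturated completion differs from a saturated configuration of $R$ only near the boundary. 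This costs $O(k/a+k/b)$ and gives $d(k,R)\le d(k,[m]\times[n])+O(k/a+k/b+a/m+b/n)$ after averaging over a tiling.
\end{itemize}
The care needed in (ii) is that boundary cells of $R$ may be empty in $S$ but not blocked within $R$. Adding them keeps a configuration only if done greedily, and any added cells lie in the boundary strip, so the count still works.

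Finally, $L_{\text{high}},L_{\text{low}}\in(0,1)$ follows from two simple bounds. For the upper bound, every $k$-line must miss two cells, so the horizontal segments of length $k$ force density at most $1-2/k<1$. For the lower bound, a saturated set must block every empty cell via a near line through it, so each empty cell has an occupied cell within distance $k$, giving density at least $c/k^2>0$ uniformly in the board size.
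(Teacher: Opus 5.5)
You have a concrete gap in the tiling step, which you use both for the lower bound on $D$ and in (i). You handle $k$-lines meeting two copies and $k$-lines lying inside one copy. You do not handle $k$-lines that meet exactly one copy in $k-1$ cells and have their last cell in a buffer. Such a $(k-1)$-set is a near $k$-line of the big board, but it need not be a near $k$-line of $R$. For example, the corner segment $\{(i,k-i):1\le i\le k-1\}$, and its analogues at the other three corners, lies in no $k$-line of $R$, because both extensions $(0,k)$ and $(k,0)$ leave $R$. So nothing prevents a configuration of $R$, even an extremal saturated one, from containing such a segment; for $k=3$, $\{(1,2),(2,1)\}$ is an example. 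The union of copies then contains a near $k$-line whose missing cell is an empty buffer cell. Widening the buffers does not help, so your claim ``the union is therefore a configuration'' is false as stated.

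The fix is the paper's \emph{danger lines} device: delete one cell from each such corner segment in every copy. That is at most four cells per copy, costing $O(1/(ab))$ in density for $D$. In (i) the deletion also breaks your claim that a saturated extension adds only buffer cells, because a deleted cell may have belonged to the only near line blocking some empty cell of the copy. However, only cells sharing a $k$-line with a deleted cell can become unblocked. That is $O(k)$ cells per copy, an $O(k/(ab))$ error, so your estimates survive once you account for it.

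With this repair, your argument is correct and takes a different route from the paper. The paper proves existence along each fixed pair of maps by a one-sided tiling argument: tiling a near-extremal small block gives $\liminf D\ge\limsup D$ and $\limsup d\le\liminf d$. It then gets independence of the maps by interleaving two sequences. Your two-sided comparison with a fixed intermediate rectangle $R$ gives a Cauchy criterion uniform over all rectangles, so existence and independence come out together.

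Your restriction arguments for the reverse inequalities are correct. This includes the observation in (ii) that empty cells at distance at least $k$ from the copy's boundary stay blocked by $k$-lines inside the copy. Strictly speaking, the one-sided tiling alone already yields the uniform statement via a Fekete-type argument, so those extra arguments are harmless but not needed. You also verify $L_{\text{high}},L_{\text{low}}\in(0,1)$, which the paper omits. Note that this, like the statement itself, requires $k\ge 3$: for $k=2$ every saturated configuration on a large rectangle is empty.
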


The existence of the limit is based on the asymptotic monotonicity of those sequences. Roughly, we can build a bigger subset via tiling some small configurations, it produces configurations with density $\varepsilon$-close to that of building blocks. This method provides the monotonicity on far-tail terms of the sequence.

\begin{proof}[Proof of Proposition \ref{limit_existence}] Fix a such pair $m(t)$ and $n(t)$, and write 
\begin{align*}
    D(t):=D(k,[m(t)]\times[n(t)])\ \ \text{ and }\ \  d(t):=d(k,[m(t)]\times[n(t)]).
\end{align*}
Since both sequences $\{D(t)\}_{t=1}^\infty$ and $\{d(t)\}_{t=1}^{\infty}$ are bounded, their limit superiors and limit inferiors exist. Define 
\begin{align*}
    L_{\text{high}}:=\limsup_{t\to\infty}D(t)\ \ \ \text{ and }\ \ \ L_{\text{low}}:=\liminf_{t\to\infty}d(t).
\end{align*}
We prove the claim by constructing sequences of configurations that attain these limits. Let $X\subseteq [m_0]\times [n_0]$. For any rectangle $R=[m]\times [n]$, define
\begin{align*}
    S(X,R):=\bigcup_{v\in V} (X+v),
\end{align*}
where $$V:=(m_0+2)\bigl[0,\lfloor\tfrac{m}{m_0+2}\rfloor-1\bigr]\times (n_0+2)\bigl[0,\lfloor\tfrac{n}{n_0+2}\rfloor-1\bigr]\subseteq R.$$ In other words, $S(X,R)$ is obtained by placing disjoint copies of $X$ inside $R$, separated by two empty rows and columns.

Consider any $k$-line $L$ in $R$. If $L$ intersects with two copies of $X$, then it must pass through two empty rows or columns, and hence cannot form a near $k$-line. If $X\in\mathcal{C}_k([m_0]\times [n_0])$, there are no near $k$-lines contained entirely within a single copy of $X$. The only remaining possibility is a diagonal $(k-1)$-line whose endpoints lie on the boundary of $[m_0]\times [n_0]$. Such a line could become a near $k$-line in $R$ if it is fully contained in $X$, we call them \emph{danger lines}. Let $D$ be a set containing one point from each danger line, then we derive a configuration $S(X\backslash D,R)\in\mathcal{C}_k(R)$. 

Let $S'=S'(X,R)\in\mathcal{S}_k(R)$ be a saturated configuration with $S'\supseteq S(X\backslash D,R)$. Since each copy of $X\backslash D$ is saturated in $R$, the additional points in $S'\backslash S$ must be outside the boxes $([m_0]\times [n_0])+v$ for all $v\in V$. By counting the number of such points, we obtain upper and lower bounds on $\frac{|S'|}{|R|}$:
\begin{align*}
    \frac{|S'|}{|R|}\geq \frac{|S|}{|R|}&\geq \frac{(\frac{m}{m_0+2}-1)(\frac{n}{n_0+2}-1)}{mn}(|X|-|D|)\\
    &\geq \frac{m_0n_0}{(m_0+2)(n_0+2)}\frac{|X|}{m_0n_0}-\frac{|X|}{m(n_0+2)}-\frac{|X|}{(m_0+2)n}-\frac{4}{m_0n_0}\\
    &\geq\left(\frac{m_0n_0}{(m_0+2)(n_0+2)}-\frac{2\max\{m_0,n_0\}}{\min\{m,n\}}\right)\frac{|X|}{m_0n_0}-\frac{4}{m_0n_0},
\end{align*}
and
\begin{align*}
    \frac{|S'|}{|R|}=\frac{|S|}{|R|}+\frac{|S'\backslash S|}{|R|}&\leq \frac{|S|}{|R|}+\frac{(2\lfloor\frac{m}{m_0+2}\rfloor+(m_0+2)) n+m(2\lfloor\frac{n}{n_0+2}\rfloor+(n_0+2))}{|R|}\\
    &\leq \frac{|X|}{(m_0+2)(n_0+2)}+\frac{2}{m_0+2}+\frac{m_0+2}{m}+\frac{2}{n_0+2}+\frac{n_0+2}{n}\\
    &\leq \frac{|X|}{m_0n_0}+\frac{4}{\min\{m_0,n_0\}}+\frac{2\max\{m_0+2,n_0+2\}}{\min\{m,n\}}.
\end{align*}
Given $\varepsilon>0$. Let $N_0$ be an index such that for every $t\geq N_0$,
\begin{align*}
    \sup_{s\geq t}\{D(s)\}<L_{\text{high}}+\varepsilon\ \ \ \text{ and }\ \ \ \inf_{s\geq t}\{d(s)\}>L_{\text{low}}-\varepsilon.
\end{align*}

Let $N_1\geq N_0$ be the index that we pick $X$ from $\mathcal{S}_k([m(N_1)]\times [n(N_1)])$ later. It means that $m_0=m(N_1)$ and $n_0=n(N_1)$. For the maximum density, we choose $N_1$ such that
\begin{align*}
    D(N_1)>L_{\text{high}}-\frac{1}{3}\varepsilon,\ \     \frac{m_0n_0}{(m_0+2)(n_0+2)}>1-\frac{1}{6 }\varepsilon,\ \text{ and }\ \frac{4}{m_0n_0}<\frac{1}{3}\varepsilon.
\end{align*}
Let $X$ be one reaching maximum density. Let $N_{\text{high}}\geq N_1$ be an index such that $\min\{m(t),n(t)\}\geq \frac{12}{\varepsilon}\max\{m_0,n_0\}$ for every $t\geq N_{\text{high}}$. Then, for $R=[m(t)]\times [n(t)]$ with $t\geq N_{\text{high}}$, we have
\begin{align*}
    L_{\text{high}}+\varepsilon >D(t)\geq\frac{|S'(X,R)|}{|R|}&> \left(1-\frac{1}{6}\varepsilon-\frac{1}{6}\varepsilon\right)\left(L_{\text{high}}-\frac{1}{3}\varepsilon\right)-\frac{1}{3}\varepsilon\geq L_{\text{high}}-\varepsilon.
\end{align*}
For the minimum density, we choose $N_1$ be such that
\begin{align*}
    d(N_1)<L_{\text{low}}+\frac{1}{2}\varepsilon\ \ \ \text{ and }\ \ \ \frac{4}{\min\{m_0,n_0\}}<\frac{1}{4}\varepsilon.
\end{align*}
Let $X$ be one reaching minimum density. Let $N_{\text{low}}\geq N_1$ be an index such that $\min\{m(t),n(t)\}\geq \frac{8}{\varepsilon}\max\{m_0+2,n_0+2\}$ for every $t\geq N_{\text{low}}$. Then, for $R=[m(t)]\times [n(t)]$ with $t\geq N_{\text{low}}$, we have
\begin{align*}
    L_{\text{low}}-\varepsilon<d(t)\leq \frac{|S'(X,R)|}{|R|}<\left(L_{\text{low}}+\frac{1}{2}\varepsilon\right)+\frac{1}{4}\varepsilon+\frac{1}{4}\varepsilon=L_{\text{low}}+\varepsilon.
\end{align*}
\end{proof}

\begin{proof}[Proof of Proposition \ref{limit_uniqueness}]
    Given two pairs $(m_0,n_0)$ and $(m_1,n_1)$ s.t. $m_i(t),n_i(t)\in \omega(1)$ for $i=0,1$. For $i=0,1$, by Proposition \ref{limit_existence}, the limit of the sequence $S_i=\{D(k,[m_i(t)]\times[n_i(t)])\}_{t=1}^\infty$ exists, we call it $L_i$. Consider the mixed pair $(m,n)$ defined by $$m(t):=m_r(q)\ \ \ \text{ and }\ \ \ n(t):=n_r(q),$$ where $t=2q+r\in\N$, $q\in\Z_{\geq 0}$, and $r\in \{0,1\}$. It is clear that $m(t),n(t)\in \omega(1)$. By Proposition \ref{limit_existence}, the limit of $S=\{D(k,[m(t)]\times[n(t)])\}_{t=1}^\infty$ exists, denoted by $L_{\text{high}}$. As $S_0$ and $S_1$ are subsequences of $S$, their limits agree with that of $S$, i.e., $L_0=L_{\text{high}}=L_1$. It suffices to prove the minimum density version by applying the same argument.
\end{proof}

In fact, these propositions could be generalized to higher dimension.
\begin{proposition}
    For $k,n\in\N$, $D(k,\Z^n)$ and $d(k,\Z^n)$ exist and are well-defined.
\end{proposition}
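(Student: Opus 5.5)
The plan is to copy the two-dimensional argument of Propositions \ref{limit_existence} and \ref{limit_uniqueness}, replacing rectangles by boxes $R=\prod_{i=1}^n[\ell_i(t)]$ with every $\ell_i(t)\in\omega(1)$. We set $D(k,\Z^n):=\lim_{t\to\infty}D(k,R(t))$, and similarly for $d$. Write $D(t),d(t)$ for the extremal densities of $R(t)$, and put $L_{\text{high}}:=\limsup D(t)$ and $L_{\text{low}}:=\liminf d(t)$. Both sequences lie in $[0,1]$, so these quantities exist.

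\textbf{Tiling construction.} Take $X\subseteq B_0=\prod_i[a_i]$ and tile $R$ by translates $X+v$, with $v\in V:=\prod_i (a_i+2)[0,\lfloor \ell_i/(a_i+2)\rfloor-1]$. Consecutive copies are separated by two empty hyperplane slabs in every coordinate direction. Now let $L$ be a $k$-line of $R$ with direction $\vec d\in\{-1,0,1\}^n\setminus\{0\}$.
\begin{itemize}
\item If $L$ meets two different boxes, some coordinate $i$ with $d_i\neq 0$ must cross a gap of width $2$. Then $L$ contains two empty points, so it is not a near $k$-line.
\item Otherwise $L$ meets only one box $B_0+v$. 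The only danger is a $(k-1)$-subset of $L\cap(B_0+v)$ lying in $X+v$ while $L$ sticks out of the box by one point. Such a $(k-1)$-segment lies inside $B_0$ and runs to its boundary; call it a danger line.
\end{itemize}
We remove one point of each danger line from $X$ and call the removed set $D$. The number of danger lines is at most the number of pairs (boundary point of $B_0$, direction), which is $O_n(3^n\sum_i\prod_{j\ne i}a_j)$. Hence $|D|/|B_0|=O_n(3^n/\min_i a_i)$, which tends to $0$ when all $a_i\to\infty$. This estimate replaces the crude term $4/(m_0n_0)$ of the planar proof. Let $S'\supseteq S(X\setminus D,R)$ be any saturated configuration.

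\textbf{Density bounds.} For the lower bound, $|S'|/|R|\ge \prod_i\frac{a_i}{a_i+2}\cdot\frac{|X|-|D|}{|B_0|}-O\!\left(\sum_i a_i/\ell_i\right)$, where the last term accounts for the partial tiles at the edges of $R$. For the upper bound, saturatedness of $X$ inside $B_0$ is not preserved after removing $D$. So instead of using it, we bound $|S'\cap\bigcup_v(B_0+v)|\le |X|\cdot|V|+|D|\cdot|V|$, claiming only that points added inside a box lie in $D$ or outside $X$. This is the main obstacle: in the planar proof the claim ``each copy is saturated'' is what bounds the new points. What survives in general is the following. If $X$ is saturated in $B_0$, any point of $B_0\setminus X$ completes a near $k$-line inside $B_0$, and that line is still present in $X\setminus D$ unless it meets $D$. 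So the added interior points number at most $|D|$ times the number of $k$-lines through a point, i.e. $|D|\cdot O(k3^n)$ per box. This is still $o(|B_0|)$, and I would carry it through this way. Points outside the boxes number at most $\sum_i (2/(a_i+2)+(a_i+2)/\ell_i)|R|$. Together these give
\[
\frac{|S'|}{|R|}\le\frac{|X|}{|B_0|}+O_{n,k}\!\left(\frac1{\min_i a_i}+\max_i\frac{a_i}{\ell_i}\right).
\]

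\textbf{Conclusion.} With these two bounds, the $\varepsilon$-argument of Proposition \ref{limit_existence} works verbatim. Pick $t_1$ large with $D(t_1)$ close to $L_{\text{high}}$ (respectively $d(t_1)$ close to $L_{\text{low}}$), take $X$ extremal in $R(t_1)$, and then take all later $t$ with $\min_i\ell_i(t)\gg \max_i a_i$. This gives $\liminf D(t)\ge L_{\text{high}}-\varepsilon$ and $\limsup d(t)\le L_{\text{low}}+\varepsilon$, so both limits exist. Uniqueness follows exactly as in Proposition \ref{limit_uniqueness}: interleave two admissible sequences of shapes. The interleaved sequence is again admissible, so it has a limit, and both original sequences are subsequences of it, so their limits agree.
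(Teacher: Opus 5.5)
Your proposal is correct and follows the route the paper intends. The paper gives no separate proof of this proposition; it only remarks that the gap-separated tiling of Proposition~\ref{limit_existence} and the interleaving argument of Proposition~\ref{limit_uniqueness} carry over to boxes in $\Z^n$, which is exactly what you carry out. Your extra bookkeeping is what that transfer needs to be rigorous: you bound $|D|/|B_0|$ by $O_n(3^n/\min_i a_i)$ rather than by the constant used in the planar proof, which need not hold for $n\ge 3$, and you bound the points a saturated extension can add inside each box by $O(k3^n)|D|$, since $X\setminus D$ need not stay saturated.
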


\section{Results for the Grid}\label{section3}

In this section, we study the grid $\mathbb{Z}^2$. We obtain nearly tight bounds on $D(k,\mathbb{Z}^2)$ (Theorem~\ref{thm2}), establish upper and lower bounds on $d(k,\mathbb{Z}^2)$ with gap $8k^{-1}$ (Theorem~\ref{thm3}), and determine the exact values of $D(3,\mathbb{Z}^2)$ and $d(3,\mathbb{Z}^2)$ (Theorem~\ref{thm1}).

Before proving these bounds, we show that the density spectrum of the grid forms an interval, so that it suffices to study the extremal densities.

\begin{theorem}
    For every $k\in\N_{\geq 2}$ and $\alpha\in(d(k,\Z^2),D(k,\Z^2))_{\R}$, there is a sequence $\{S_i\}_{i=1}^\infty$, where $S_i\in \mathcal{S}_k([\ell_i]^2)$ for every $i\in\N$, such that $\lim\limits_{i\to\infty} \frac{|S_i|}{\ell_i^2}=\alpha$.
\end{theorem}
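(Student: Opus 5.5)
Fix $\alpha\in(d(k,\Z^2),D(k,\Z^2))$ and write $\alpha=\lambda D+(1-\lambda)d$ with $\lambda\in(0,1)$, where $D=D(k,\Z^2)$ and $d=d(k,\Z^2)$. The plan is to interpolate between a near-maximum and a near-minimum saturated configuration by a mixed tiling, reusing the block construction $S(X,R)$ from the proof of Proposition \ref{limit_existence}. Fix $\varepsilon>0$. By Propositions \ref{limit_existence} and \ref{limit_uniqueness} (with $m=n$), there is a large $\ell_0$ and two sets $X_{\max},X_{\min}\in\mathcal{S}_k([\ell_0]^2)$ with densities within $\varepsilon$ of $D$ and $d$ respectively; we may use a common side length $\ell_0$ for both.

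For $\ell\gg\ell_0/\varepsilon$, partition $[\ell]^2$ into the grid of boxes $([\ell_0]^2)+v$, $v\in(\ell_0+2)\Z^2$, separated by two empty rows and columns as before. Choose a set $V_{\max}$ of about a $\lambda$-fraction of the boxes; a concrete choice is the boxes in the first $\lfloor\lambda N\rfloor$ columns of boxes, where $N=\lfloor \ell/(\ell_0+2)\rfloor$. Place $X_{\max}\setminus D_{\max}$ in those boxes and $X_{\min}\setminus D_{\min}$ in the rest. Here $D_{\max}, D_{\min}$ remove one point from each danger diagonal, exactly as in the proof of Proposition \ref{limit_existence}. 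The union $S$ is a configuration: any $k$-line meeting two boxes crosses two empty rows or columns, and lines inside one box are handled by saturation of the block minus danger points. Since at most $4$ points are removed per block, and this is negligible once $\ell_0$ is large, $S$ is a configuration of density within $O(\varepsilon)$ of $\lambda D+(1-\lambda)d=\alpha$.

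Next, extend $S$ to some $S'\in\mathcal{S}_k([\ell]^2)$. As in the upper estimate of Proposition \ref{limit_existence}, we need to show that only few points are added. Points added inside a box must not create a near $k$-line. The subtle issue is whether $X\setminus D_X$ remains saturated inside its box once the neighbouring separators contain new points. Indeed, a point of the box interior that was blocked only through a line using outside points of $[\ell_0]^2$ does not occur, since $X$ was saturated in $[\ell_0]^2$ itself, so every interior empty point is blocked by a line inside the box. The removed danger points may be re-added, but that costs at most $4$ per block. All other new points lie in separator strips, whose total area fraction is at most $\frac{4}{\ell_0+2}+O(\ell_0/\ell)$. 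Hence $|S'|/\ell^2=\alpha+O(\varepsilon)$.

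Taking $\varepsilon=1/i$ and $\ell_i$ accordingly yields the sequence $S_i$ with $|S_i|/\ell_i^2\to\alpha$. The main obstacle I expect is the saturation bookkeeping in the previous paragraph: verifying that extending $S$ cannot add many points inside the boxes. I would first treat it precisely as the paper does in Proposition \ref{limit_existence}, namely that each block stays saturated within its box, and bound everything else by the area of the separators plus the danger points. The other needed input, choosing both extremal blocks of the same size $\ell_0$, is immediate from Proposition \ref{limit_uniqueness}.
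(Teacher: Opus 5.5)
Your proposal is correct and uses the same ingredients as the paper. These are near-extremal saturated blocks, deletion of one point from each danger diagonal, two empty rows and columns between blocks, and a final saturation whose new points are charged to the separators.

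Where you differ is in how the target density is produced. The paper sets $\alpha'=\frac{\alpha-d}{D-d}$, takes its binary expansion, and iterates the doubling maps $0(\cdot)$ and $1(\cdot)$. As a result, $M_i$ is a dyadic mixture of extremal blocks at many different scales $2^{j}k-2$, and a single recursively defined sequence approximates $\alpha$ up to the truncation error $2^{-i}$. You instead fix one scale $\ell_0$, put $X_{\max}$ in a $\lambda$-fraction of the boxes and $X_{\min}$ in the rest, and rebuild for each $\varepsilon=1/i$.

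Your route is shorter. It needs no recursion and no extremal blocks of matching side lengths at every level. All its error terms are the ones already estimated in Proposition \ref{limit_existence}. The paper's recursion buys a single recursively built family, which the statement does not require.

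One bookkeeping point should be corrected, though it does not affect the conclusion. Deleting a danger point $r$ can also unblock empty points of the box whose only blocking $k$-lines pass through $r$. These are the lines with exactly two points outside $X$. So during saturation, more than the $4$ deleted points may be re-added inside a block, contrary to your claim. To bound the damage: $r$ lies on at most $4k$ $k$-lines inside the box, and each tight line has two empty points. Hence at most $4+32k$ points per block are affected, which is $o(\ell_0^2)$ for fixed $k$. The paper's own count (``each copy of $X\backslash D$ is saturated'') makes the same simplification.
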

\begin{proof}
    For simplicity, we write $D$ and $d$ for $D(k,\Z^2)$ and $d(k,\Z^2)$ in this proof. We identify $M_{\ell}(\{0,1\})$ with subsets of $[\ell]^2$ via
    \begin{align*}
        M \longleftrightarrow S=\{(i,j)\in[\ell]^2:M_{i,j}=1\}
    \end{align*}
    and define the density $\delta(M):=\frac{1}{\ell^2}|\{(i,j):M_{i,j}=1\}|$. Let $\overline{M}$ denote the matrix obtained from $M$ by adding a layer of zeros around it, i.e.,
    \begin{align*}
        \overline{M}:=\begin{pmatrix}
            0&0&0&0&0&0\\
            0&M_{1,1}&M_{1,2}&\cdots&M_{1,\ell}&0\\
            0&M_{2,1}&M_{2,2}&\cdots&M_{2,\ell}&0\\
            0&\vdots&\vdots&\ddots&\vdots&0\\
            0&M_{\ell,1}&M_{\ell,2}&\cdots&M_{\ell,\ell}&0\\
            0&0&0&0&0&0\\
        \end{pmatrix}\in M_{\ell+2}(\{0,1\}).
    \end{align*}
    For every $i\in\N_{\geq 2}$, let $S_{0,i},S_{1,i}\in\mathcal{S}_k([2^ik-2]^2)$ be saturated configurations reaching minimum and maximum density, respectively. Let $S_{j,i}'$ be obtained from $S_{j,i}$ by removing one point from each danger line (defined in the proof of Proposition \ref{limit_existence}), and regard $S_{0,i}'$ and $S_{1,i}'$ as matrices. Define two maps on $M_{2^ik}(\{0,1\})$ by
    \begin{align*}
        1(M):= \begin{pmatrix}
            M&\overline{S_{1,i}'}\\
            M&\overline{S_{1,i}'}
        \end{pmatrix},\ \ \ \  0(M):=\begin{pmatrix}
            M&\overline{S_{0,i}'}\\
            M&\overline{S_{0,i}'}
        \end{pmatrix}.
    \end{align*}
    Then for $j=0,1$, $$\delta(j(M))=\frac{1}{2}\delta(M)+\frac{1}{2}\delta(\overline{S_{j,i}}).$$
    Let $\alpha':=\frac{\alpha-d}{D-d}$ and write its binary expansion as $0.\alpha_1\alpha_2\alpha_3\cdots$.\footnote{It need not exclude infinite long consecutive $1$s in $\{\alpha_i\}_{i=1}^\infty$.} We construct matrices $\{M_i\}_{i=1}^\infty$ inductively. Fix an initial $M_0=\overline{S'}$ for some $S\in\mathcal{S}_k([2k-2]^2)$, where $S'$ is obtained from $S$ by removing one point from each danger line. For every $i\in\N$, we define $M_i$ by
    \begin{align*}
        M_i:=\alpha_1(\alpha_2(\cdots\alpha_{i}(M_{i-1})\cdots)).
    \end{align*}
    Each step ($\alpha_j(\cdot)$) doubles the side length, so the ground set for $M_i$ is $[ 2^{\binom{i+1}{2}+1}k]^2$. We set $\ell_i:= 2^{\binom{i+1}{2}+1}k$. Unfolding the recursion gives
    \begin{align*}
        \delta(M_i)=\frac{1}{2^i}\delta(M_{i-1})+\sum_{j=1}^i \frac{1}{2^j} \delta\left(\overline{S_{\alpha_j,\phi(i,j)}}\right),
    \end{align*}
    where $\phi(i,j):=\binom{i}{2}+1+(i-j)$. By Proposition \ref{limit_existence} and $\frac{(2^i-2)^2}{2^{2i}}\to1$, for any $\varepsilon>0$ there is an $N_0\in \N$ such that for all $i\geq N_0$,
    \begin{align*}
        \delta(\overline{S_{1,i}})>D-\varepsilon\ \ \text{ and }\ \ \delta(\overline{S_{0,i}})< d+\varepsilon.
    \end{align*}
    For $i$ large enough, with $2^{-i}<\varepsilon$ and $\phi(i,i)\geq N_0$, we have
    \begin{align*}
        \delta(M_i)=\frac{1}{2^i}\delta(M_{i-1})+\sum_{j=1}^i \frac{1}{2^j} \delta\left(\overline{S_{\alpha_j,\phi(i,j)}}\right)
        \in \sum_{j\in [i]:\alpha_j=1} \frac{D}{2^j}+\sum_{j\in [i]:\alpha_j=0} \frac{d}{2^j}\pm 2\varepsilon.
    \end{align*}
    By subtraction and division, we get
    \begin{align*}
        \sum_{j\in [i]:\alpha_j=1} \frac{D}{2^j}+\sum_{j\in [i]:\alpha_j=0} \frac{d}{2^j}= (D-d)\left(\sum_{j=1}^i \frac{\alpha_j}{2^j}\right)+d-\frac{d}{2^i}.
    \end{align*}
    Since $\sum\limits_{j=1}^i \frac{\alpha_j}{2^j}$ approximates $\alpha'$ with error at most $2^{-i}$, it follows that
    \begin{align*}
        \delta(M_i)\in (D-d)\alpha'+d\pm 4\varepsilon=\alpha\pm 4\varepsilon.
    \end{align*}
    Let $S_i'$ be the subset corresponding to $M_i$. Then $\frac{|S_i'|}{\ell_i^2}\to\alpha$ as $i\to\infty$. By the same argument as in the proof of Proposition \ref{limit_existence}, $S_i'$ avoids near $k$-lines since all danger lines are removed. Although $S_i'$ need not be saturated, we can extend it to a saturated configuration $S_i\supseteq S_i'$. The additional points $S_i\backslash S_i'$ must be contained in the empty boundary regions between blocks. A direct count shows that, as $i\to\infty$,
    \begin{align*}
        \frac{|S_i\backslash S_i'|}{\ell_i^2}\leq \frac{4\left(\binom{i+1}{2}+1\right)\ell_i}{\ell_i^2} \to 0,
    \end{align*}
    and hence $\frac{|S_i|}{\ell_i^2}\to \alpha$ as $i\to\infty$.
\end{proof}

\subsection{Nearly Tight Bounds on $D(k,\Z^2)$}

We split the proof into two parts: the upper bound and the lower bound. For the upper bound, we introduce the following lemma.

\begin{lemma}\label{lem:D:upper bound} For $k\in\N_{\geq 2}$, let $B$ be a finite $k$-board and $\mathcal{L}^\star$ be a family of pairwise-disjoint $k$-lines. Then $D(k,B)\leq 1-\frac{2}{k}+\frac{2|R|}{k|B|}$, where $R=R(\mathcal{L}^\star):=B\backslash (\cup_{L\in \mathcal{L}^\star} L)$.
\end{lemma}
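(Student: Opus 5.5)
Take any saturated configuration $S\in\mathcal{S}_k(B)$ attaining $D(k,B)$, and count its points separately inside each line of the disjoint family $\mathcal{L}^\star$ and inside the remainder $R$. Since $S$ contains no near $k$-line, every $k$-line $L\in\mathcal{L}$ satisfies $|L\setminus S|\geq 2$, i.e.\ $|S\cap L|\leq k-2$. This holds in particular for each $L\in\mathcal{L}^\star$. On $R$ use only the trivial bound $|S\cap R|\leq |R|$.

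The lines in $\mathcal{L}^\star$ are pairwise disjoint and $R$ is the complement of their union, so $B$ is the disjoint union of the lines and $R$. Hence
\begin{align*}
|B| = k\,|\mathcal{L}^\star| + |R|, \qquad |S| = \sum_{L\in\mathcal{L}^\star}|S\cap L| + |S\cap R| \leq (k-2)\,|\mathcal{L}^\star| + |R|.
\end{align*}
Substituting $|\mathcal{L}^\star| = (|B|-|R|)/k$ gives
\begin{align*}
|S| \leq \Bigl(1-\tfrac{2}{k}\Bigr)\bigl(|B|-|R|\bigr)+|R| = \Bigl(1-\tfrac{2}{k}\Bigr)|B| + \tfrac{2}{k}|R|.
\end{align*}
Dividing by $|B|$ yields $D(k,B)\leq 1-\frac{2}{k}+\frac{2|R|}{k|B|}$, as claimed.

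There is no real obstacle here. The only points to check are that the lines of $\mathcal{L}^\star$ are $k$-lines of $B$ itself, so the near-line constraint applies to them, and that disjointness makes the decomposition of $B$ an exact partition. Saturation is not needed: the bound holds for every configuration, hence in particular for the maximizer.
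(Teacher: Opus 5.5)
Your proof is correct and matches the paper's argument: you bound $|S\cap L|\leq k-2$ on each line of $\mathcal{L}^\star$ and $|S\cap R|\leq |R|$, then use $|B|=k|\mathcal{L}^\star|+|R|$ and divide by $|B|$. Your remark that saturation is not needed is also accurate, since the bound holds for every configuration in $\mathcal{C}_k(B)$.
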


By this lemma, for every $\ell\in\N$, as the disjoint horizontal $k$-lines family
\begin{align*}
    \mathcal{L}_{\ell}:=\{(x,y)+[0,k-1](1,0): x\in 1+k[0,\ell-1],\ y\in[k\ell]\}
\end{align*}
forms a partition of $[k\ell]^2$, we have
\begin{align*}
    D(k,[k\ell]^2)\leq 1-\frac{2}{k}+\frac{2|R|}{k|B|}=1-\frac{2}{k}
\end{align*}
since $R(\mathcal{L}_{\ell})=\emptyset$. Consequently, we obtain the upper bound
\begin{align*}
    D(k,\Z^2)=\lim_{\ell\to\infty} D(k,[k\ell]^2)\leq 1-\frac{2}{k}.
\end{align*}

\begin{proof}[Proof of Lemma \ref{lem:D:upper bound}]
    Let $S\in\mathcal{S}_k(B)$ be a saturated configuration reaching maximum density. By the definition of configuration, we have $|S\cap L|\leq k-2$ for every $k$-line $L$, which implies
    \begin{align*}
        |S|=\sum_{L\in\mathcal{L}^\star}|S\cap L|+|S\cap R|\leq (k-2)|\mathcal{L}^\star|+|R|=\left(1-\frac{2}{k}\right)|B|+\frac{2}{k}|R|.
    \end{align*}
    Dividing both sides by $|B|$, we get
    \begin{align*}
        D(k,B)=\frac{|S|}{|B|}\leq 1-\frac{2}{k}+\frac{2|R|}{k|B|}.
    \end{align*}
\end{proof}

The lower bound is based on Lemma \ref{lem:D:lower bound}, which provides required lower bound when $k$ is not a multiple of $3$. 
\begin{lemma}\label{lem:D:lower bound}
    For $k\in\N_{\geq 2}$ with $3\nmid k$, we have $D(k,\Z^2)\geq 1-\frac{2}{k}.$
\end{lemma}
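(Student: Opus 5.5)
The plan is to exhibit an explicit periodic configuration on $\Z^2$ whose complement has density exactly $\frac{2}{k}$, and then restrict it to large squares $[\ell]^2$ and saturate it. Saturating only adds points, so it can only increase the density. Concretely, define the empty set
\begin{align*}
    E:=\{(x,y)\in\Z^2 : x+2y\equiv 0 \text{ or } 1 \pmod k\},
\end{align*}
and let $S:=\Z^2\backslash E$. Since $x+2y \bmod k$ takes each residue equally often over any $k\times k$ block, the density of $S$ is exactly $1-\frac{2}{k}$. The case $k=2$ is trivial, since the bound is $0$, so assume $k\geq 4$.

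The key step is to check that every $k$-line $L=p+[0,k-1]\vec d$ meets $E$ in at least two points, so that $S\cap[\ell]^2$ contains no near $k$-line. Along $L$ the linear form $f(x,y)=x+2y$ changes by a fixed step $s=f(\vec d)$. Up to sign, the four directions $(1,0),(0,1),(1,1),(1,-1)$ give $s=1,2,3,-1$.

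Let $g=\gcd(s,k)$. As $t$ runs over $[0,k-1]$, the values $f(p)+ts \bmod k$ run through one coset of $g\Z/k\Z$, each element hit exactly $g$ times. Because $3\nmid k$, we have $\gcd(3,k)=1$, and also $\gcd(1,k)=1$ and $\gcd(2,k)\in\{1,2\}$. So there are two cases.
\begin{itemize}
    \item If $g=1$, every residue appears once, so both residues $0$ and $1$ are hit and $|L\cap E|=2$.
    \item If $g=2$, the coset is either the even or the odd residues, each hit twice. It contains $0$ or $1$ respectively, so again $|L\cap E|\geq 2$.
\end{itemize}
This is exactly where the hypothesis $3\nmid k$ enters: the diagonal direction $(1,1)$ has step $3$. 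It also explains why the coefficient $2$ is chosen. With a single linear form $x+ay$ one cannot make all of $1,a,a+1,a-1$ coprime to an even $k$, so the gcd-$2$ case has to be absorbed by taking one even and one odd empty residue.

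Finally, for each $\ell$, the set $S\cap[\ell]^2$ is a configuration in $\mathcal{C}_k([\ell]^2)$, because every $k$-line of the standard board is a $k$-line of $\Z^2$. Extend it to some saturated $S_\ell\in\mathcal{S}_k([\ell]^2)$. Then
\begin{align*}
    D(k,[\ell]^2)\geq \frac{|S_\ell|}{\ell^2}\geq \frac{|S\cap[\ell]^2|}{\ell^2}\geq 1-\frac{2}{k}-O\!\left(\frac{1}{\ell}\right),
\end{align*}
where the error term accounts for incomplete periods at the boundary. Letting $\ell\to\infty$ and using Propositions \ref{limit_existence} and \ref{limit_uniqueness} gives $D(k,\Z^2)\geq 1-\frac{2}{k}$.

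The main obstacle is finding a linear form and residue pair that work for all four directions simultaneously. The gcd/coset counting above resolves it, and the remaining steps are routine.
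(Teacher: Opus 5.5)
Your proposal is correct and follows essentially the same route as the paper. Both remove the residue classes $0$ and $1$ of $x_1+2x_2 \bmod k$ and use the same gcd case split over the steps $\pm1,\pm2,\pm3$, where the step $\pm 2$ with $k$ even is handled by the two removed residues having opposite parity. The only cosmetic difference is in the limiting step: the paper restricts to $\ell\in k\N$ and gets density exactly $1-\frac{2}{k}$ by partitioning into horizontal $k$-lines, whereas you allow general $\ell$ with an $O(1/\ell)$ boundary error and explicitly extend to a saturated configuration.
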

For the case that $k$ is a multiple of $3$, noting that, in a hyper-rectangle, every near $k$-line contains a near $(k-1)$-line, hence near $(k-1)$-line avoidance implies near $k$-line avoidance. In particular, we have $\mathcal{C}_{k-1}([\ell]^2)\subseteq \mathcal{C}_k([\ell]^2)$, which implies
\begin{align*}
    D(k,[\ell]^2)\geq D(k-1,[\ell]^2).
\end{align*}
Combining Lemma \ref{lem:D:lower bound}, we derive $D(k,\Z^2)\geq D(k-1,\Z^2)\geq 1-\frac{2}{k-1}$.

To prove this lemma, we construct a configuration by removing zero set of some equations. Being an algebraic structure, a zero set is expected to have nearly the same number of points in any given line segments.
\begin{proof}[Proof of Lemma \ref{lem:D:lower bound}]\label{Proof of Theorem D:lower}
    Given $k\in\N_{\geq 2}$ with $3\nmid k$, we define
    \begin{align*}
        Z(r):=\{\ x=(x_1,x_2)\in \Z^2\ :\ x_1+2x_2=r\pmod{k}\}
    \end{align*}
    for each $r\in [0,k-1]$. First, we claim that the equality $|L\cap Z(0)|+|L\cap Z(1)|=2$ holds for every $k$-line on $\Z^2$. Fix a such $k$-line $L=v+[0,k-1]\Vec{d}$, where $v\in\Z^2$ and $\Vec{d}\in\{-1,0,1\}^2\backslash\{0\}$. Note that $v+a\Vec{d}\in Z(r)$ if and only if
    \begin{align*}
        (d_1+2d_2)a = r- (v_1+2v_2)\pmod{k}.
    \end{align*}
    If $d_1+2d_2$ and $k$ are coprime, there is a unique solution in $[0,k-1]$ to the equation. In this case, we have
    \begin{align*}
        |L\cap Z(0)|+|L\cap Z(1)|=1+1=2.
    \end{align*}
    As all possible values of $d_1+2d_2$ are $\pm1$, $\pm2$, and $\pm3$, the only non-coprime case is $d_1+2d_2=\pm2$ and $k\in 2\N$. In this case, the equation has zero or two solutions depending on the parity of $r-(v_1+2v_2)$. Since $0-(v_1+2v_2)$ and $1-(v_1+2v_2)$ must have different parity, it leads to $|L\cap Z(0)|+|L\cap Z(1)|=2$. Consider the subset $S_{\ell}:=[\ell]^2\backslash (Z(0)\sqcup Z(1))$ in the board $[\ell]^2$. By the claim, it is near $k$-line avoiding since it misses exactly two points from every $k$-line. Therefore, $S_{\ell}\in\mathcal{C}_k([\ell]^2)$. 
    For every $\ell\in \N$, the family $\mathcal{L}_{\ell}$ defined above is a partition of $[k\ell]^2$ consisting of $k$-lines. Hence
    \begin{align*}
        |S_{k\ell}|=\sum_{L\in\mathcal{L}_{\ell}} |S_{k\ell}\cap L|=\sum_{L\in\mathcal{L}_{\ell}}(k-2)=|\mathcal{L}_{\ell}|(k-2)=(k\ell)^2\left(1-\frac{2}{k}\right).
    \end{align*}
    Then we obtain $D(k,\Z^2)=\lim\limits_{\ell\to\infty} D(k,[k\ell]^2)\geq \lim\limits_{\ell\to\infty}\frac{|S_{k\ell}|}{(k\ell)^2}=1-\frac{2}{k}.$
\end{proof}

\subsection{Bounds on $d(k,\Z^2)$}

To lower bound $d(k,\Z^2)$, given a saturated configuration $S\in \mathcal{S}_k([\ell]^2)$, we count the number of pairs of points inside and outside of $S$ lying in some $k$-line. Then we obtain a relation between $|S|$ and $|[\ell]^2\backslash S|$; in particular, it implies:

\begin{lemma}
\label{lem:d:lower bound}
    For $k,\ell\in\N_{\geq 2}$, we have $d(k,[\ell]^2)\geq \frac{k-2}{16+(k-2)}=1-16k^{-1}-o(k^{-1}).$
\end{lemma}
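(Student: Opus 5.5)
The bound is a double count of pairs $(t,s)$ with $t\notin S$, $s\in S$, and $s,t$ on a common $k$-line that \emph{witnesses} the saturation at $t$. We get a lower bound for the number of pairs from saturation and an upper bound from near $k$-line avoidance. Fix $\ell$ and a saturated configuration $S\in\mathcal{S}_k([\ell]^2)$, and write $T:=[\ell]^2\backslash S$. Since $S$ is bounded away from the trivial case, it suffices to prove
\begin{align*}
    (k-2)|T|\leq 16|S|.
\end{align*}
Then $|S|/\ell^2=|S|/(|S|+|T|)\geq \frac{k-2}{(k-2)+16}$, which is the claimed bound. The asymptotic form $1-16k^{-1}-o(k^{-1})$ is a routine expansion.

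\textbf{Lower count.} For each $t\in T$, maximality of $S$ means $S\cup\{t\}$ contains a near $k$-line. Since $S$ itself contains none, there is a $k$-line $L_t\ni t$ in the board with $|L_t\backslash(S\cup\{t\})|=1$. Equivalently, $|L_t\cap S|=k-2$ and $L_t\backslash S=\{t,t'\}$ for some $t'$. Fix one such witness line $L_t$ for every $t$ and let $P:=\{(t,s): t\in T,\ s\in L_t\cap S\}$. Then $|P|=(k-2)|T|$.

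\textbf{Upper count.} Fix $s\in S$ and bound the number of $t$ with $(t,s)\in P$. Any such $t$ lies on one of the four lines through $s$ (horizontal, vertical, two diagonals), determined by the direction of $L_t$. Fix one direction and one side of $s$ along it. If $(t,s)\in P$, then the segment strictly between $s$ and $t$ lies in $L_t$, which has only one point of $T$ besides $t$. So there is at most one point of $T$ strictly between $s$ and $t$, and $t$ must be the first or second point of $T$ encountered when walking from $s$ in that direction. Hence each side of each direction contributes at most $2$ values of $t$, giving at most $2\cdot 2\cdot 4=16$ per $s$. (A pair $(t,s)$ determines the direction, so nothing is overcounted.) Therefore $|P|\leq 16|S|$, and combining with the lower count gives $(k-2)|T|\leq 16|S|$.

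\textbf{Expected obstacle.} The only delicate point is the local claim in the upper count: we must use that the \emph{whole} segment between $s$ and $t$ lies inside $L_t$. This holds because $L_t$ is a contiguous segment in a single direction. We also need that boundary effects of $[\ell]^2$ cause no trouble. They do not: $L_t$ is a genuine $k$-line inside the board, and the upper bound only becomes smaller near the boundary.
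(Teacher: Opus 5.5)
Your proposal is correct and is essentially the paper's proof. The paper builds the bipartite graph between $N=[\ell]^2\backslash S$ and $S$, joining $x$ to $y$ whenever some near $k$-line through $y$ lies in $S\cup\{x\}$, and then double counts edges using $\deg(x)\geq k-2$ together with the bound that at most two empty points are reachable on each of the eight rays from $y$, which gives $(k-2)|N|\leq 16|S|$. The only difference is that you fix one witness line $L_t$ per empty point, so your lower count is an exact equality rather than an inequality.
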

\begin{proof}[Proof of Lemma \ref{lem:d:lower bound}]
    The bound is trivial for $k=2$, hence we assume $k\geq 3$. Let $S\in\mathcal{S}_k([\ell]^2)$ be a saturated configuration reaching minimum density. Define $N:=[\ell]^2\backslash S$. Consider the bipartite graph $G=(N\sqcup S,E)$ defined by
    \begin{align*}
        E:=\big\{\{x,y\}:x\in N,\ y\in S,\ \exists \text{ near $k$-line $L$ s.t. }y\in L\subseteq S\cup\{x\}\big\}.
    \end{align*}
    Given a $x\in N$, as $S\cup\{x\}\not\in\mathcal{C}_k([\ell]^2)$, there is a near $k$-line $L\subseteq S\cup\{x\}$. By the definition of $G$, we have $L\backslash\{x\}\subseteq N_G(x)$, hence $\delta(N)\geq k-2$. Conversely, given a $y\in S$ and a direction $\Vec{d}\in\{-1,0,1\}^2\backslash\{0\}$. For every $i\in\N$, let $n_i\in \N$ be the integer such that $x_i:=y+n_i\Vec{d}$ is the $i$-th empty position in the direction $\Vec{d}$ starting from $y$. Note that, for any $k$-line $L$ containing both $y$ and $x_i$, there are $i-1$ points in $L$, say $x_1,x_2,\cdots,x_{i-1}$, not included in $S\cup\{x_i\}$. Therefore, $x_i\not\in N_G(y)$ if $i\geq 3$. As there are eight directions, we have $\Delta(S)\leq 8\times 2=16$. Count the number of edges in $G$, we get
    \begin{align*}
        (k-2)|N|=\delta(N)|N|\leq |E(G)|\leq \Delta(S)|S|\leq 16|S|,
    \end{align*}
    which means $|N|\leq \frac{16}{k-2}|S|$. Then the minimum density is bounded below by  
    $$d(k,[\ell]^2)=\frac{|S|}{\ell^2}=\frac{|S|}{|N|+|S|}\geq \frac{|S|}{\frac{16}{k-2}|S|+|S|}=\frac{k-2}{16+(k-2)}.$$
\end{proof}

The upper bound is obtained via a specific construction. The key idea is that minimizing the number of points needed to control a given region yields a stronger upper bound. In particular, a filled $(k-2)\times (k-2)$ square almost controls the surrounding $(k+2)\times (k+2)$ square, in which each point controls at least four $k$-lines, making it a natural candidate for the construction. By tiling such pattern, we get:

\begin{lemma}
\label{lem:d:upper bound}
    For $k\in\N_{\geq 4}$, we have $d(k,\Z^2)\leq \frac{(k-2)^2+8}{(k+2)^2}=1-8k^{-1}+o(k^{-1}).$
\end{lemma}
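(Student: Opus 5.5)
We prove the bound with an explicit periodic construction: we build a saturated configuration on a large square $[n(k+2)]^2$ whose density tends to $\frac{(k-2)^2+8}{(k+2)^2}$. Since $d(k,\Z^2)$ is well defined by Propositions \ref{limit_existence} and \ref{limit_uniqueness}, it then suffices to pass to the limit along $\ell=n(k+2)$, using that boundary effects contribute only $O(n(k+2))$ points.

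\emph{The tile.} Take the period $k+2$. In each $(k+2)\times(k+2)$ block $T=[0,k+1]^2$, fill the central square $Q=[2,k-1]^2$, which has side $k-2$. So each block has a $(k-2)\times(k-2)$ filled square surrounded by a frame of width $2$. The frames of adjacent blocks merge into empty corridors of width $4$ between consecutive filled squares.

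\emph{Avoidance.} A $k$-line can contain at most $k-2$ points of a single square $Q$. Any $k$-line meeting two different squares must cross a corridor of width $4$ in its direction, and diagonals also cross corridors of width $4$. Hence every $k$-line has at least $2$ empty points, and the periodic set $\bigcup(Q+(k+2)\Z^2)$ is near $k$-line avoiding.

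\emph{Saturation.} This set is not yet saturated. A point in a corridor at distance $1$ or $2$ from a square's side lies on a horizontal or vertical $k$-line consisting of the $k-2$ points of a row of $Q$ plus that point plus one more empty point. Such a line has exactly one empty point only if the added point is adjacent to $Q$ and the line is a row of $Q$ extended by two cells. So points in the frame rows and columns adjacent to $Q$ are almost controlled, but cells at distance $2$ and cells near the corners of $Q$ may be addable.

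The "$+8$" in the numerator suggests the repair: add $8$ extra points per tile so that the result is saturated. My first guess is to place them at the corner regions, for example two points near each of the four corners of $Q$ along the diagonal directions (such as $(0,0),(1,1)$ and their symmetric images), or points in the middle of the corridors. The aim is that every remaining empty cell completes a near $k$-line, while no $k$-line picks up a $(k-1)$-th point.

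\emph{The main obstacle.} The key step is choosing these $8$ points and verifying both properties by a case analysis over the eight directions and the positions of an empty cell $x$ in the frame. There are two things to check:
\begin{itemize}
\item \textbf{Control of every empty cell.} For each empty $x$, some $k$-line through $x$ contains $k-1$ points of the configuration together with $x$ (for instance, a row or diagonal of $Q$ extended by $x$ and by one of the added points).
\item \textbf{No new near $k$-lines.} No $k$-line contains $k-1$ configuration points. In particular, the added points must not extend rows, columns or diagonals of $Q$ too far.
\end{itemize}
For the diagonal checks I would use that a diagonal of $Q$ has length at most $k-2$. The hypothesis $k\ge 4$ is needed so that $Q$ is nonempty and the corridor geometry works.

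If a pure $(k+2)$-periodic tiling with exactly $8$ extra points fails, a fallback is to take any configuration containing the periodic pattern and complete it greedily to a saturated one. One then argues that every added point must lie in the corner or corridor region not controlled by $Q$, and counts that region to get at most $8$ cells per tile. This yields the density bound in the limit, just as in the proof of Proposition \ref{limit_existence}.
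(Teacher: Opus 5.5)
There is a genuine gap. Your setup (period $k+2$, a filled $(k-2)\times(k-2)$ square $Q$, and avoidance for the pure square tiling $P$) matches the paper. However, the step you defer, deciding which cells $P$ fails to control, is the whole content of the lemma, and what you say about it is wrong.

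\begin{itemize}
\item Your saturation paragraph says cells at distance $2$ from $Q$ may be addable. This is false. For $y\in[2,k-1]$, the $k$-line $[0,k-1]\times\{y\}$ consists of $(0,y)$, the single empty cell $(1,y)$, and a full row of $Q$. So $(0,y)$ is already controlled, and $(1,y)$ is controlled by $[1,k]\times\{y\}$.
\item Your explicit candidates $(0,0),(1,1)$ cannot be added at all. After adding $(1,1)$, the $k$-line $\{(i,i):1\le i\le k\}$ contains $k-1$ configuration points. These two cells are exactly the ones controlled by the diagonal of $Q$.
\item ``Middle of the corridors'' cells lie in rows or columns of $Q$, so they are controlled too.
\end{itemize}

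So the primary plan fails at the one step that matters. The fallback's claim of ``at most $8$ uncontrolled cells per tile'' is asserted without argument, and it contradicts your own preceding analysis, which would leave on the order of $4(k-2)$ addable cells per tile.

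The fallback is nevertheless sound and closes quickly once the count is done correctly.
\begin{itemize}
\item The frame $T\setminus Q$ has $(k+2)^2-(k-2)^2=8k$ cells.
\item The $8(k-2)$ frame cells lying in a row or column of $Q$ are controlled, as shown above.
\item In each $2\times 2$ corner block, the two cells on a diagonal or antidiagonal of $Q$ are controlled, e.g.\ $(0,0)$ via $\{(i,i):0\le i\le k-1\}$.
\item This leaves exactly the eight off-diagonal corner cells $U=\{(0,1),(1,0),(k,0),(k+1,1),(k+1,k),(k,k+1),(1,k+1),(0,k)\}$.
\end{itemize}
Every controlling line lies inside its own tile, so on $[n(k+2)]^2$ there is no boundary loss at all. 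A subset of a configuration is again a configuration, so any saturated $S'\supseteq P$ has $S'\setminus P$ contained in the translates of $U$. This gives $|S'|\le n^2\bigl((k-2)^2+8\bigr)$, which is the bound.

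The paper takes the other branch. Its eight points $(1,2),(2,1),\dots$ are exactly your $U$ shifted by one, and it asserts that $Q\cup U$, tiled periodically, is itself a saturated configuration. That requires checking that the added cells create no near $k$-line. The check is tight: in your coordinates, the diagonal $k$-line from $(-2,-1)$ to $(k-3,k-2)$ has exactly two empty cells. This is where $k\ge 4$ is used. The greedy-completion route skips this avoidance check entirely and yields exactly the inequality the lemma needs.
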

\begin{proof}[Proof of Lemma \ref{lem:d:upper bound}]
    Consider the following subset inside $[k+2]^2$: The square $[3,k]^2$ in the center along with eight points surrounding each corners of the ground set
    $$\{(1,2), (2,1), (k+1,1), (k+2,2), (k+2,k+1), (k+1,k+2), (2,k+2), (1,k+1)\}.$$
    We call this subset $\mathcal{F}_k$, check the figure \ref{fig:filled-cog} for the example $k=5$.
    \begin{figure}[htp!]
        \centering
        \ytableausetup{boxsize=1.1em}
        \ytableaushort[]{
{*(gray!60)}{*(black)}{*(gray!60)}{*(gray!60)}{*(gray!60)}{*(black)}{*(gray!60)},
{*(black)}{*(gray!60)}{*(gray!60)}{*(gray!60)}{*(gray!60)}{*(gray!60)}{*(black)},
{*(gray!60)}{*(gray!60)}{*(black)}{*(black)}{*(black)}{*(gray!60)}{*(gray!60)},
{*(gray!60)}{*(gray!60)}{*(black)}{*(black)}{*(black)}{*(gray!60)}{*(gray!60)},        
{*(gray!60)}{*(gray!60)}{*(black)}{*(black)}{*(black)}{*(gray!60)}{*(gray!60)},
{*(black)}{*(gray!60)}{*(gray!60)}{*(gray!60)}{*(gray!60)}{*(gray!60)}{*(black)},
{*(gray!60)}{*(black)}{*(gray!60)}{*(gray!60)}{*(gray!60)}{*(black)}{*(gray!60)}}
        \ytableausetup{boxsize=normal}
        \caption{$\mathcal{F}_5$}
        \label{fig:filled-cog}
    \end{figure}
    
    Note that, when $k\geq 4$, such pattern contains no near $k$-line and adding any remaining point in the ground set $[k+2]^2$ creates a near $k$-line within the ground set. For $\ell=n(k+2)\in(k+2)\N$, let $S$ be the subset built by tiling $\mathcal{F}_k$ over $[\ell]^2$. In particular, we define
    \begin{align*}
        S:=\bigcup_{v\in ((k+2)[0,n-1])^2} \mathcal{F}_k+v.
    \end{align*}
    
It is clear that $S$ contains no near $k$-lines. Given a point $x\in [\ell]^2\backslash S$, let $v_x\in ((k+2)[0,n-1])^2$ be such that $x\in [k+2]^2+v_x$. By the property of $\mathcal{F}_k$, we know there is a near $k$-line in $\mathcal{F}_k\cup \{x-v_x\}$. In other words, $S\cup\{x\}\supseteq (\mathcal{F}_k\cup\{x-v_x\})+v_x$ has a near $k$-line particularly inside $[k+2]^2+v_x$. Therefore, as $S$ is saturated, we have the upper bound
    \begin{align*}
        d(k,[\ell]^2)\leq \frac{|S|}{\ell^2}=\frac{\frac{\ell^2}{(k+2)^2}|\mathcal{F}_k|}{\ell^2}=\frac{(k-2)^2+8}{(k+2)^2}.
    \end{align*}
\end{proof}

Note that eight points around corners in $\mathcal{F}_k$ dominate when $k$ is small. In addition, these points control no points at all.\footnote{Although, they do control some points when $k=4$, but those points are also controlled by the central square from other tiles. Therefore, they are still redundant when minimizing the density.} A better bound for $k\leq 8$ could be obtained by removing these points. Consider $\mathcal{F}_k$ but removing these points from the ground set, denoted by $\mathcal{F}_k'$.
    \begin{figure}[htp!]
        \begin{minipage}{0.45\textwidth}
        \centering
        \ytableausetup{boxsize=1.1em}
        \ytableaushort[]{
{*(gray!60)}\none{*(gray!60)}{*(gray!60)}{*(gray!60)}\none{*(gray!60)},
\none{*(gray!60)}{*(gray!60)}{*(gray!60)}{*(gray!60)}{*(gray!60)}\none,
{*(gray!60)}{*(gray!60)}{*(black)}{*(black)}{*(black)}{*(gray!60)}{*(gray!60)},
{*(gray!60)}{*(gray!60)}{*(black)}{*(black)}{*(black)}{*(gray!60)}{*(gray!60)},        
{*(gray!60)}{*(gray!60)}{*(black)}{*(black)}{*(black)}{*(gray!60)}{*(gray!60)},
\none{*(gray!60)}{*(gray!60)}{*(gray!60)}{*(gray!60)}{*(gray!60)}\none,
{*(gray!60)}\none{*(gray!60)}{*(gray!60)}{*(gray!60)}\none{*(gray!60)}}
        \ytableausetup{boxsize=normal}
        \caption{$\mathcal{F}_5'$}
        \label{fig:cog}
        \end{minipage}
        \begin{minipage}{0.45\textwidth}
        \centering
        \ytableausetup{boxsize=0.9em}
        \ytableaushort[]{
        \none{*(red)}\none{*(red)}{*(red)}{*(red)}\none{*(red)}\none\none\none\none\none\none,
        \none\none{*(red)}{*(red)}{*(red)}{*(red)}{*(red)}{*(blue)}\none{*(blue)}{*(blue)}{*(blue)}\none{*(blue)},
        \none{*(red)}{*(red)}{*(black)}{*(black)}{*(black)}{*(red)}{*(red)}{*(blue)}{*(blue)}{*(blue)}{*(blue)}{*(blue)}\none,
        \none{*(red)}{*(red)}{*(black)}{*(black)}{*(black)}{*(red)}{*(red!50!blue)\textcolor{white}{\times}}{*(blue)}{*(black)}{*(black)}{*(black)}{*(blue)}{*(blue)},
        \none{*(red)}{*(red)}{*(black)}{*(black)}{*(black)}{*(red)}{*(red!50!blue)\textcolor{white}{\times}}{*(blue)}{*(black)}{*(black)}{*(black)}{*(blue)}{*(blue)},
        \none\none{*(red)}{*(red)}{*(red)}{*(red)}{*(red)}{*(blue)}{*(blue)}{*(black)}{*(black)}{*(black)}{*(blue)}{*(blue)},
        {*(green)}{*(red)}{*(green)}{*(green!50!red)\textcolor{white}{\times}}{*(green!50!red)\textcolor{white}{\times}}{*(red)}{*(green)}{*(red)}{*(blue)}{*(blue)}{*(blue)}{*(blue)}{*(blue)}\none,
        \none{*(green)}{*(green)}{*(green)}{*(green)}{*(green)}{*(yellow)}{*(blue)}{*(yellow)}{*(yellow!50!blue)\textcolor{white}{\times}}{*(yellow!50!blue)\textcolor{white}{\times}}{*(blue)}{*(yellow)}{*(blue)},
        {*(green)}{*(green)}{*(black)}{*(black)}{*(black)}{*(green)}{*(green)}{*(yellow)}{*(yellow)}{*(yellow)}{*(yellow)}{*(yellow)}\none\none,
        {*(green)}{*(green)}{*(black)}{*(black)}{*(black)}{*(green)}{*(green!50!yellow)\textcolor{black}{\times}}{*(yellow)}{*(black)}{*(black)}{*(black)}{*(yellow)}{*(yellow)}\none,
        {*(green)}{*(green)}{*(black)}{*(black)}{*(black)}{*(green)}{*(green!50!yellow)\textcolor{black}{\times}}{*(yellow)}{*(black)}{*(black)}{*(black)}{*(yellow)}{*(yellow)}\none,
        \none{*(green)}{*(green)}{*(green)}{*(green)}{*(green)}{*(yellow)}{*(yellow)}{*(black)}{*(black)}{*(black)}{*(yellow)}{*(yellow)}\none,
        {*(green)}\none{*(green)}{*(green)}{*(green)}\none{*(green)}{*(yellow)}{*(yellow)}{*(yellow)}{*(yellow)}{*(yellow)}\none\none,
        \none\none\none\none\none\none{*(yellow)}\none{*(yellow)}{*(yellow)}{*(yellow)}\none{*(yellow)}\none
        }
        \ytableausetup{boxsize=normal}
        \caption{the shifted covering}
        \label{fig:cog-covering}
        \end{minipage}
    \end{figure}
Consider the covering shown in Figure \ref{fig:cog-covering}. 

Let $S'$ be the union of $\mathcal{F}_k'$ in the covering and $C$ be the union of ground sets. It is clear that $S'$ is near $k$-line avoiding. Let $S''\in \mathcal{S}_k([\ell]^2)$ be the saturated configuration containing $S'$. As $S'$ is already saturated inside $C$, we know
\begin{align*}
    \frac{|S''\cap C|}{|C|}=\frac{|S'\cap C|}{|C|}=\frac{(k-2)^2}{k^2+2k+2}.
\end{align*}
Further, the difference $S''\backslash C$ lies in the band of width $k+1$ from the border, i.e., $|S'\backslash S|\leq 4(k+1)\ell$. Then we may conclude that
\begin{align*}
    d(k,[\ell]^2)\leq \frac{|S''|}{\ell^2}=\frac{|S''\cap C|}{\ell^2}+\frac{|S''\backslash C|}{\ell^2}\leq \frac{(k-2)^2}{k^2+2k+2}+\frac{4(k+1)}{\ell},
\end{align*}
which leads to the following lemma.

\begin{lemma}
\label{lem:d:upper bound(small)}
    For $k\in\N_{\geq 3}$, we have $d(k,\Z^2)\leq \frac{(k-2)^2}{k^2+2k+2}=1-6k^{-1}+o(k^{-1}).$
\end{lemma}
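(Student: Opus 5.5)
The plan is to make rigorous the covering construction sketched just before the statement, following the proof of Lemma \ref{lem:d:upper bound}. Let $Q:=[3,k]^2$ be the central filled square, and let $G:=[k+2]^2$ minus the eight points $(1,2),(2,1),(k+1,1),(k+2,2),(k+2,k+1),(k+1,k+2),(2,k+2),(1,k+1)$. So $G$ is the ground set of $\mathcal{F}_k'$, and $\mathcal{F}_k'=Q$ viewed inside $G$. Reading off Figure \ref{fig:cog-covering}, I would take the lattice
\begin{align*}
    \Lambda:=\Z(1,k+1)+\Z(k+1,-1),
\end{align*}
whose determinant is $(k+1)^2+1=k^2+2k+2$. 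Set $S':=\bigcup_{v\in\Lambda}(Q+v)$ and $C:=\bigcup_{v\in\Lambda}(G+v)$. Then $S'$ has exactly one $(k-2)^2$ block per fundamental domain of $\Lambda$, so its density is $(k-2)^2/(k^2+2k+2)$.

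\textbf{Step 1: $C=\Z^2$.} I would check that $G$ meets every residue class of $\Z^2/\Lambda$. One way is to exhibit a fundamental domain of $\Lambda$ inside $G$: the $k\times k$ block $[2,k+1]^2$ together with a suitable strip of $2k+2$ cells. Alternatively, I would verify directly that the cells of $[k+2]^2\setminus G$, and the cells just outside $[k+2]^2$, are covered by the neighbouring translates $G\pm(1,k+1)$ and $G\pm(k+1,-1)$, which is exactly what the figure shows. Since $|G|-\det\Lambda=2k-6$, the covering has overlaps: these are the crossed cells in the figure.

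\textbf{Step 2: $S'$ avoids near $k$-lines.} Distinct translates of $Q$ are separated by at least three empty rows or columns, with offsets of one. A $k$-line meeting two blocks must therefore cross at least two empty cells, so it cannot be a near $k$-line. A $k$-line meeting a single block contains at most $k-2$ of its points, since $Q$ is $(k-2)\times(k-2)$. The diagonal case needs a short explicit check, and I expect the shift by $1$ in the lattice vectors to be what makes it work.

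\textbf{Step 3: saturation on $C$.} For every $x\in G\setminus Q$ I would exhibit a near $k$-line inside $Q\cup\{x\}$. If $x$ lies in rows or columns $1,2,k+1,k+2$ next to a side of $Q$, an axis-parallel line gives $k-1$ consecutive points in $Q\cup\{x\}$. For the cells $(1,1),(2,2)$ and their symmetric images, a diagonal line works: for instance $\{(1,1),\dots,(k,k)\}$ misses only $(2,2)$. The eight removed cells are exactly those not handled by either case. Translating, every $x\in\Z^2\setminus S'$ is controlled.

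\textbf{Finishing.} On $[\ell]^2$, take $S'\cap[\ell]^2$ and extend it to some $S''\in\mathcal{S}_k([\ell]^2)$. Points of $[\ell]^2$ whose controlling tile lies fully inside $[\ell]^2$ cannot be added to $S''$. Hence $S''\setminus S'$ lies in a boundary band of width $O(k)$, and the blocks cut by the boundary also contribute only $O(k\ell)$ points. This gives
\begin{align*}
    d(k,[\ell]^2)\le\frac{(k-2)^2}{k^2+2k+2}+O\!\left(\frac{k}{\ell}\right),
\end{align*}
and letting $\ell\to\infty$ via Proposition \ref{limit_uniqueness} yields the statement. I expect Steps 1 and 2 to be the main obstacle, since they are the tiling bookkeeping. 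To handle them I would first find the fundamental domain explicitly and then do the diagonal-separation check for $k=3$ and for general $k$.
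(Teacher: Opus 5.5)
Your proposal is correct and is essentially the paper's proof. It uses the same tile $\mathcal{F}_k'$ and the same shifted covering: your lattice $\Lambda=\mathbb{Z}(1,k+1)+\mathbb{Z}(k+1,-1)$ is exactly the one drawn in Figure \ref{fig:cog-covering}. It then argues saturation inside $C$ and absorbs the rest into a boundary band of width $O(k)$, making explicit the bookkeeping the paper leaves to the figure.

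The checks you deferred do go through. Every nonzero vector of $\Lambda$ has sup-norm at least $k+1$, with equality only for $\pm(1,k+1)$ and $\pm(k+1,-1)$. This gives the separation in Step 2, diagonals included. It also shows that the only coincidences of $G$ modulo $\Lambda$ are the $2k-6$ disjoint pairs $(r,1)\sim(r+1,k+2)$ for $r\in[3,k-1]$ and $(1,c)\sim(k+2,c-1)$ for $c\in[4,k]$. Hence $G$ meets all $k^2+2k+2$ cosets and $C=\mathbb{Z}^2$.
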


Notably, this construction works for the case $k=3$ but has a loose bound than Lemma \ref{lem:d:upper bound} as $k$ grows.

\subsection{Determining $D(3,\Z^2)$ and $d(3,\Z^2)$}

For $k=3$, according to Lemma \ref{lem:d:lower bound} and Lemma \ref{lem:d:upper bound(small)}, we have
\begin{align*}
    \frac{1}{17}=\frac{3-2}{16+(3-2)}\leq d(3,\Z^2)\leq \frac{(3-2)^2}{3^2+6+2}=\frac{1}{17},
\end{align*}
i.e., the minimum density is known. For the maximum density, we first lower bound it by a specific configuration, then we show that any saturated configuration reaching maximum density must be isomorphic to such configuration.

\begin{theorem}\label{D(3,Z^2)=1/5} $D(3,\Z^2)=\frac{1}{5}$.
\end{theorem}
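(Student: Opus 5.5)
The plan is to prove the two inequalities separately: a lattice construction gives $D(3,\Z^2)\geq\frac15$, and a disjoint-neighbourhood packing argument gives $D(3,\Z^2)\leq\frac15$. The starting point is to make the constraint explicit for $k=3$. A near $3$-line is a pair of points lying in a common $3$-line. So $S$ is a configuration exactly when no difference $q-p$ of two points of $S$ lies in
\begin{align*}
    F:=\{\pm(1,0),\pm(0,1),\pm(1,1),\pm(1,-1),\pm(2,0),\pm(0,2),\pm(2,2),\pm(2,-2)\}.
\end{align*}
In other words, two points of $S$ may not lie at distance $1$ or $2$ along any axis or diagonal direction. Among nearby differences, only the knight moves $(\pm1,\pm2),(\pm2,\pm1)$ (and longer vectors) remain allowed.

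\textbf{Lower bound.} I would take the lattice
\begin{align*}
    \Lambda:=\{x\in\Z^2: x_1+2x_2\equiv 0 \pmod 5\},
\end{align*}
which is generated by the knight moves $(1,2)$ and $(2,-1)$ and has density $\frac15$. Under $x\mapsto x_1+2x_2 \bmod 5$, the elements of $F$ map to the residues $\pm1,\pm2,\pm3,\pm4,\pm2,\pm4,\pm6,\pm2$. None of these is $0$ modulo $5$, so $\Lambda\cap[\ell]^2$ contains no near $3$-line. Any saturated configuration in $\mathcal{S}_3([\ell]^2)$ containing $\Lambda\cap[\ell]^2$ then has density at least $|\Lambda\cap[\ell]^2|/\ell^2\to\frac15$. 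By Proposition~\ref{limit_uniqueness} this gives $D(3,\Z^2)\geq\frac15$. Saturation of $\Lambda$ itself is not needed, although it does hold in the interior: the residues of $F$ cover all of $1,2,3,4$.

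\textbf{Upper bound.} For each $p\in S$, let $P(p):=\{p,p\pm(1,0),p\pm(0,1)\}$ be the plus-shaped set of $5$ cells. Suppose $P(p)\cap P(q)\neq\emptyset$ for distinct $p,q\in S$. Then $q-p=u-v$ with $u,v\in\{0,\pm e_1,\pm e_2\}$. Every nonzero such difference is one of $\pm e_i$, $\pm2e_i$, or $\pm e_1\pm e_2$, and all of these lie in $F$, which is a contradiction. So the sets $P(p)$, $p\in S$, are pairwise disjoint. They are contained in $[0,\ell+1]^2$, which gives
\begin{align*}
    5|S|\leq(\ell+2)^2, \qquad\text{hence}\qquad D(3,[\ell]^2)\leq\frac{(\ell+2)^2}{5\ell^2}\to\frac15.
\end{align*}

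I expect no real obstacle. The only step needing care is checking that every pairwise difference of plus-shape cells lies in $F$, and that the boundary effects vanish in the limit. The paper's remark about uniqueness up to isomorphism is an optional strengthening. For it I would show that equality in the packing forces the plus shapes to tile $\Z^2$ up to a boundary layer of vanishing density. Plus-shape tilings are exactly the two knight lattices $x_1\pm 2x_2\equiv c\pmod 5$.
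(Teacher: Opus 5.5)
Your overall plan matches the paper's. Your lattice $x_1+2x_2\equiv 0 \pmod 5$ is a translate of the paper's set $\langle(2,-1),x\rangle\equiv 1$, since $2(x_1+2x_2)\equiv 2x_1-x_2$. Your plus-shape packing is the paper's double count of unit-distance edges between $S$ and $[\ell]^2\setminus S$. The lower bound is fine.

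The upper bound, however, rests on a false step. On the standard board $[\ell]^2$ a $3$-line must lie inside $[\ell]^2$, so your equivalence ``configuration $\iff$ no difference in $F$'' holds in only one direction. A near $3$-line always has its difference in $F$, which is all the lower bound uses. But two points with difference in $F$ need not lie on a common $3$-line of the board. This happens for diagonally adjacent pairs at the corners, e.g.\ $(1,2)$ and $(2,1)$: both anti-diagonal $3$-lines through them need $(0,3)$ or $(3,0)$. So $\{(1,2),(2,1)\}$ extends to a saturated configuration. For that configuration $P((1,2))\cap P((2,1))=\{(1,1),(2,2)\}$, contradicting your disjointness claim. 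Hence $5|S|\le(\ell+2)^2$ is not justified. It is in fact false for $\ell=2$: that board has no $3$-lines, so $S=[2]^2$ and $5|S|=20>16$.

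The repair is short, and it is exactly what the paper's ``rare structure'' and exceptional set $\mathcal{R}$ account for. Overlapping plus shapes force $q-p\in\{\pm e_i,\pm 2e_i,\pm e_1\pm e_2\}$; take $\ell\ge 3$.
\begin{itemize}
\item A pair with difference $\pm 2e_i$ has its midpoint in the board.
\item A pair with difference $\pm e_i$ always has one of its two extensions in the board.
\item A pair with difference $\pm(e_1\pm e_2)$ has neither extension in the board only for the four corner pairs $\{(1,2),(2,1)\}$, $\{(\ell-1,1),(\ell,2)\}$, $\{(1,\ell-1),(2,\ell)\}$, $\{(\ell-1,\ell),(\ell,\ell-1)\}$.
\end{itemize}
Each such pair overlaps in two cells, so Bonferroni gives $5|S|\le(\ell+2)^2+8$, and the limit $\frac15$ is unaffected. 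With this fix your version is slightly cleaner than the paper's. Counting inside $[0,\ell+1]^2$ avoids discarding the $O(\ell)$ points of $S$ on the boundary of $[\ell]^2$.
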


\begin{proof}[Proof of Theorem \ref{D(3,Z^2)=1/5}]
Consider the subset
\begin{align*}
    Z:=\{\Vec{x}\in\Z^2\ :\ \gen{(2,-1),\Vec{x}}=1\pmod{5}\}.
\end{align*}
For each $\ell\in\N$, we define $S_\ell:=[\ell]^2\cap Z$. See Figure \ref{fig:S_10} for an illustration of $S_{10}$. Note that
$$\gen{(2,-1),\{-1,0,1\}^2\backslash\{0\}}=\{\pm1,\pm2,\pm3\}$$
is a set in which each element is coprime with $5$. Thus, following the proof of Lemma \ref{lem:D:lower bound}, every $5$-line intersects $S_\ell$ at exactly one point. As a result, we have $|S_\ell\cap L|\leq 1\leq 3-2$ for every $3$-line $L$; in other words, $S_\ell$ is near $3$-line avoiding.

    \begin{figure}[htp!]
        \begin{minipage}{0.51\textwidth}
        \centering
        \ytableausetup{boxsize=0.9em}
        \ytableaushort[]{
{*(white)}{*(white)}{*(black)}{*(white)}{*(white)}{*(white)}{*(white)}{*(black)}{*(white)}{*(white)},
{*(white)}{*(white)}{*(white)}{*(white)}{*(black)}{*(white)}{*(white)}{*(white)}{*(white)}{*(black)},
{*(white)}{*(black)}{*(white)}{*(white)}{*(white)}{*(white)}{*(black)}{*(white)}{*(white)}{*(white)},
{*(white)}{*(white)}{*(white)}{*(black)}{*(white)}{*(white)}{*(white)}{*(white)}{*(black)}{*(white)},
{*(black)}{*(white)}{*(white)}{*(white)}{*(white)}{*(black)}{*(white)}{*(white)}{*(white)}{*(white)},
{*(white)}{*(white)}{*(black)}{*(white)}{*(white)}{*(white)}{*(white)}{*(black)}{*(white)}{*(white)},
{*(white)}{*(white)}{*(white)}{*(white)}{*(black)}{*(white)}{*(white)}{*(white)}{*(white)}{*(black)},
{*(white)}{*(black)}{*(white)}{*(white)}{*(white)}{*(white)}{*(black)}{*(white)}{*(white)}{*(white)},
{*(white)}{*(white)}{*(white)}{*(black)}{*(white)}{*(white)}{*(white)}{*(white)}{*(black)}{*(white)},
{*(black)}{*(white)}{*(white)}{*(white)}{*(white)}{*(black)}{*(white)}{*(white)}{*(white)}{*(white)}
        }
        \ytableausetup{boxsize=normal}
        \caption{$S_{10}$}
        \label{fig:S_10}
        \end{minipage}
        \begin{minipage}{0.48\textwidth}
        \centering
        \ytableaushort[]{{*(gray!60)}{*(white)}{*(white)}{*(white)},{*(gray!60)}{*(black)}{*(white)\star}{*(white)},{*(gray!60)}{*(white)\star}{*(black)}{*(white)},{*(gray!60)}{*(gray!60)}{*(gray!60)}{*(gray!60)}}
        \caption{\centering Rare structure\\ $\star$: possible positions for $y$\\(Positions in gray are outside $[\ell]^2$.)}
        \label{fig:rare-structure-1}
        \end{minipage}
    \end{figure}

For $\ell\in 5\N$, $[\ell]^2$ can be partitioned into horizontal $5$-lines. Let $\mathcal{L}$ be a such partition, then we can lower bound the maximum density by
\begin{align*}
    D(3,[\ell]^2)\geq \frac{|S_{\ell}|}{\ell^2}=\frac{1}{\ell^2}\sum_{L\in\mathcal{L}} |S_{\ell}\cap L|=\frac{1}{\ell^2}\times \frac{\ell^2}{5}\times1=\frac{1}{5}.
\end{align*}
Let $\ell$ go to infinity, we derive $D(3,\Z^2)\geq \frac{1}{5}$.

We now upper bound the maximum density. Given a saturated configuration $S\in\mathcal{S}_3([\ell]^2)$. For every $x\in [2,\ell-1]^2\cap S$, its four adjacent points, $x+(\pm1,0)$ and $x+(0,\pm1)$, are not in $S$; otherwise, $S$ would not be near $3$-line avoiding. Conversely, for every $y\in [\ell]^2\backslash S$, if there are two of its adjacent points lying in $S$, it would become a near $3$-line except for the rare case shown in Figure \ref{fig:rare-structure-1}. Such case happens only if $$y\in \mathcal{R}:=\{(1,1), (2,2), (\ell,1), (\ell-1,2), (\ell,\ell), (\ell-1,\ell-1), (1,\ell), (2,\ell-1)\}.$$
Consider the bipartite graph $G=(S\sqcup N,E)$ where
\begin{align*}
    N:=[\ell]^2\backslash S,\ \text{ and }\ E:=\big\{ \{x,y\}\in S\times N\ :\ \normL{x-y}_2=1\big\}.
\end{align*}
By the discussion above, we have
\begin{align*}
    |E(G)|=\sum_{x\in S}d_G(x)\geq \sum_{x\in S\cap [2,\ell-1]^2}4= 4|S\cap [2,\ell-1]^2|\geq 4(|S|-4\ell).
\end{align*}
On the other hand, we know
\begin{align*}
    |E(G)|= \sum_{y\in N} d_G(y)\leq \sum_{y\in N\backslash\mathcal{R}}1+\sum_{y\in N\cap\mathcal{R}}4\leq |N|+3|\mathcal{R}|=\ell^2-|S|+24.
\end{align*}
Combining two inequalities and moving terms, we obtain $\ell^2+16\ell+24\geq 5|S|$. Let $S$ be one that reaching maximum density, then
\begin{align*}
    D(3,[\ell]^2)=\frac{|S|}{\ell^2}\leq \frac{\ell^2+16\ell+24}{5\ell^2}=\frac{1}{5}+o(1).
\end{align*}
Eventually, taking $\ell$ to infinity, we can see $D(3,\Z^2)\leq \frac{1}{5}$ and finish the proof.
\end{proof}

\begin{theorem}
    \label{thm:D:torus}
    For $k\in\N_{\geq 2}$, suppose we have $D(k,\Z^2)>1-\frac{2}{k}-\frac{1}{2k^2-k}$, then $D(k,(\Z/k\Z)^2)=1-\frac{2}{k}$.
\end{theorem}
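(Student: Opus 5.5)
We argue by contraposition. The upper bound $D(k,(\Z/k\Z)^2)\le 1-\frac{2}{k}$ follows from Lemma \ref{lem:D:upper bound}: the $k$ horizontal $k$-lines of the torus partition it, so $R=\emptyset$ and every configuration has at most $k(k-2)$ points. Hence it suffices to show that if no configuration on the torus $T:=(\Z/k\Z)^2$ has exactly $k(k-2)$ points, then $D(k,\Z^2)\le 1-\frac{2}{k}-\frac{1}{2k^2-k}$.

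\textbf{Step 1: the torus maximum.} Assume $D(k,T)<1-\frac{2}{k}$. Since densities on $T$ are multiples of $1/k^2$, every configuration on $T$ has at most $k(k-2)-1$ points.

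\textbf{Step 2: lifting to $T$.} Take a large square $[\ell]^2$ with $\ell=nk$ and a configuration $S$ on it. For each offset $v\in[0,k-1]^2$, tile $[\ell]^2$ by the translated $k\times k$ blocks $v+[k]^2+k\Z^2$; these blocks lie fully inside $[\ell]^2$ except for $O(k\ell)$ points near the boundary. Each full block $W$ is a set of $k^2$ points whose rows, columns and toroidally wrapped diagonals are unions of at most two honest segments. The naive hope is that the $k$-lines of $T$ restricted to $W$ are genuine $k$-lines of $\Z^2$, but this fails for diagonals, which wrap around the block. I would therefore work directly with genuine lines. Within a block $W$, each row and each column is a genuine horizontal or vertical $k$-line, so $|S\cap W|\le k(k-2)$. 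The deficiency must come from diagonals: if $|S\cap W|=k(k-2)$, every row and column of $W$ misses exactly two points of $S$. The key claim is then that such a block, together with its neighbouring blocks, forces a loss elsewhere.

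\textbf{Step 3: averaging.} The cleanest route is to average over all $k^2$ offsets $v$ and, for each point, to count the wrapped diagonals of the block containing it. Each wrapped diagonal of length $k$ consists of two pieces, and the corresponding genuine $k$-line through $\Z^2$ reaches into the adjacent block. The plan is to build, from the block $W$ and its translates, a toric configuration. Concretely, reduce $S$ modulo $k$ by letting $S_v$ be the set of $x\in T$ such that every lift of $x$ in a specified $2\times2$ window of blocks lies in $S$ (or choose a majority rule). One then shows that $S_v$ is a configuration on $T$, since every toric $k$-line lifts to a genuine $k$-line inside the window. By Step 1, $|S_v|\le k(k-2)-1$, and each point missing from $S_v$ is witnessed by a missing point among at most $2k-1$ lifts (a row or diagonal spans at most $2k-1$ positions). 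Double counting over offsets gives average block density at most $1-\frac{2}{k}-\frac{1}{k^2}\cdot\frac{k}{2k-1}=1-\frac{2}{k}-\frac{1}{2k^2-k}$. Letting $\ell\to\infty$ then contradicts the hypothesis.

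\textbf{Main obstacle.} The hardest step is the reduction in Step 3: defining the toric projection so that it is near $k$-line avoiding while losing only a $\frac{1}{2k-1}$ fraction per defect. I would first try the variant in which a genuine $k$-line is taken through each residue class along a fixed strip of width $2k-1$, and charge every forced extra vacancy to the $2k-1$ strip positions. The exact constant $\frac{1}{2k^2-k}=\frac{1}{k(2k-1)}$ strongly suggests this charging scheme.
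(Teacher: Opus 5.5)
There is a genuine gap in Step 3, and that is where all the work lies. Your projection $S_v$ (the points all of whose lifts in a $2k\times 2k$ window lie in $S$) is indeed a configuration on $(\Z/k\Z)^2$. However, $|S_v|\le k(k-2)-1$ only tells you that the window contains at least $2k+1$ vacancies of $S$. The rows of the four $k\times k$ blocks already force at least $8k$ vacancies, so this adds nothing.

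The underlying problem is that the complement of $S_v$ is the union of the four projected vacancy sets. That union can be large, and nothing in your argument links "one extra toric vacancy" to a loss of one point per $k(2k-1)$ cells of $\Z^2$. Averaging honestly over windows with your bound gives a vacancy density of only about $\frac{1}{2k}$.

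Several further steps do not hold up:
\begin{itemize}
\item The claim that each missing point is "witnessed among at most $2k-1$ lifts" is unjustified; a $2\times 2$ window contains four lifts of each point.
\item The majority-rule variant does not produce a configuration at all.
\item The final constant $\frac{1}{k^2}\cdot\frac{k}{2k-1}$ is read off from the target rather than derived.
\item The "key claim" in Step 2, that a block with exactly $k(k-2)$ points forces a loss elsewhere, is never proved.
\end{itemize}

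The missing idea is an integrality-plus-rigidity argument on blocks of size $k\times(2k-1)$. Your closing remark about a strip of width $2k-1$ points at the right object, but the mechanism is not charging.
\begin{itemize}
\item \emph{Upper bound per block.} Every $k$-segment $L$ along the long side of such a block lies in a partition of the block into $2k-1$ $k$-lines. These are the $k$ parallel segments at the same positions as $L$, plus the $k-1$ full short lines at the remaining positions. Hence a configuration has at most $(2k-1)(k-2)$ points in the block.
\item \emph{Rigidity.} If the block attains $(2k-1)(k-2)$, every such segment contains exactly $k-2$ points. Comparing $L$ with its unit shift along the long side then gives $x\in S\iff x+ke\in S$, where $e$ is the long direction. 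So the block is $k$-periodic.
\item \emph{Lifting to the torus.} Periodicity is exactly what your $k\times k$ blocks cannot supply. With it, every toric $k$-line, including the wrapped diagonals, lifts to a genuine $k$-line inside the block with the same membership pattern. The $k\times k$ subsquare therefore reads as a toric configuration with $k(k-2)$ points.
\end{itemize}

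Contrapositively, suppose $D(k,(\Z/k\Z)^2)<1-\frac{2}{k}$. Then every $k\times(2k-1)$ block contains at most $(2k-1)(k-2)-1$ points of any configuration. Tile $[\ell k]\times[\ell(2k-1)]$ by such blocks and apply Proposition \ref{limit_uniqueness}; this gives $D(k,\Z^2)\le 1-\frac{2}{k}-\frac{1}{k(2k-1)}$. The constant $\frac{1}{2k^2-k}$ is simply a one-point integrality gap on a block of area $k(2k-1)$.
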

\begin{proof}
    As $(\Z/k\Z)^2$ can be partitioned into $k$-lines, by lemma \ref{lem:D:upper bound}, we have the upper bound $D(k,(\Z/k\Z)^2)\leq 1-\frac{2}{k}$. Let $\{S_\ell\}_{\ell\in\N}$ be a sequence of saturated configurations such that, for every $\ell\in\N$, $S_\ell\in\mathcal{S}_k([\ell k]\times [\ell(2k-1)])$ and is of maximum density. Consider the partition
    \begin{align*}
        \bigcup_{v\in k[0,\ell-1]\times(2k-1)[0,\ell-1]} ([k]\times [2k-1])+v
    \end{align*}
    and define $S_{\ell,v}:=(S_{\ell}\cap (([k]\times [2k-1])+v))-v$ for every $v\in k[0,\ell-1]\times(2k-1)[0,\ell-1]$. By assumption, we have
    \begin{align*}
        \frac{1}{\ell^2}\sum_v \frac{|S_{\ell,v}|}{k(2k-1)}=\frac{|S_\ell|}{k(2k-1)\ell^2}=D(k,[\ell k]\times [\ell(2k-1)])>1-\frac{2}{k}-\frac{1}{2k^2-k}
    \end{align*}
    for sufficient large $\ell$. Hence, there is $v=v(\ell)$ such that $$\frac{|S_{\ell,v}|}{k(2k-1)}>1-\frac{2}{k}-\frac{1}{2k^2-k}.$$
    On the other hand, regarding $S_{\ell,v}$ as a (saturated) configuration in $[k]\times[2k-1]$, then, by Lemma \ref{lem:D:upper bound}, we know $$\frac{|S_{\ell,v}|}{k(2k-1)}\leq 1-\frac{2}{k}.$$ Noting $\frac{|S_{\ell,v|}}{k(2k-1)}\in\frac{\Z}{2k^2-k}$ and $(1-\frac{2}{k})(2k^2-k)\in \Z$, we may conclude that $\frac{|S_{\ell,v}|}{k(2k-1)}=1-\frac{2}{k}$. We now show that $S_{\ell,v}\cap [k]^2$ is a configuration of $(\Z/k\Z)^2$. Let $L$ be an arbitrary horizontal $k$-line lying in $[k]\times [2k-1]$. There is a partition $P=\{ L_i\}_{i=1}^{2k-1}$ of $[k]\times [2k-1]$ consisting of $k$-lines including $L$. As
    \begin{align*}
        1-\frac{2}{k}=\frac{|S_{\ell,v}|}{k(2k-1)}=\frac{1}{2k-1}\sum_{i=1}^{2k-1}\frac{|S_{\ell,v}\cap L_i|}{k}\leq \frac{1}{2k-1}\sum_{i=1}^{2k-1}\left(1-\frac{2}{k}\right)=1-\frac{2}{k},
    \end{align*}
    we must have $\frac{|S_{\ell,v}\cap L_i|}{k}=1-\frac{2}{k}$ for every $i\in [2k-1]$. In particular, $\frac{|S_{\ell,v}\cap L|}{k}=1-\frac{2}{k}$. Consider $L'=L+(0,1)$, if $L'\subseteq [k]\times [2k-1]$, we also have $\frac{|S_{\ell,v}\cap L'|}{k}=1-\frac{2}{k}$. As a result, we know that $x\in S_{\ell,v}\iff y\in S_{\ell,v}$, where $L\backslash L'=\{x\}$ and $L'\backslash L=\{y\}$. Therefore, for every $x\in [k]^2$, by taking the horizontal $k$-line starting at $x$ as $L$, we derive
    \begin{align}\label{shift}
        x\in S_{\ell,v}\iff x+(0,k)\in S_{\ell,v}.
    \end{align}
    Given a $k$-line $L$ of torus board $(\Z/k\Z)^2$, let $L_1$ be the $k$-subset in $[k]^2$ and $L_2$ be the $k$-line in $[k]\times [2k-1]$ such that they map to $L$ under the canonical projection $\Z^2\to(\Z/k\Z)^2$. Then we have
    \begin{align*}
        |(S_{\ell,v}\cap [k^2])\cap L|=|S_{\ell,v}\cap L_1|\overset{(\ref{shift})}{=}|S_{\ell,v}\cap L_2|=k-2.
    \end{align*}
    It implies that $S_{\ell,v}\cap [k]^2$ is near $k$-line avoiding on $(\Z/k\Z)^2$. Hence we obtain the lower bound $$D(k,(\Z/k\Z)^2)\geq \frac{|S_{\ell,v}\cap [k]^2|}{k^2}=1-\frac{2}{k}.$$
\end{proof}

\paragraph{}
Let $S\in\mathcal{S}_k((\Z/k\Z)^2)$ be a saturated configuration reaching maximum density. Regarding it as a subset of $[k]^2$, then $S+(k[0,n-1]\times k[0,n-1])$, the subset formed by tiling $S$ over $[nk]^2$, is a saturated configuration of $[nk]^2$. Then we have
\begin{align*}
    D(k,\Z^2)=\lim_{n\to\infty} D(k,[nk]^2)\geq\frac{|S|}{k^2}=D(k,(\Z/k\Z)^2).
\end{align*}
Therefore, Theorem \ref{thm:D:torus} implies the following:
\begin{corollary}
    For $k\in\N_{\geq 2}$, we have $D(k,\Z^2)\not\in (1-\frac{2}{k}-\frac{1}{2k^2-k},1-\frac{2}{k})_{\R}$.
\end{corollary}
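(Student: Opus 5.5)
The plan is to combine Theorem \ref{thm:D:torus} with the tiling inequality in the paragraph just before the corollary, and argue by contradiction. Suppose $D(k,\Z^2)\in(1-\frac{2}{k}-\frac{1}{2k^2-k},1-\frac{2}{k})_{\R}$. The left endpoint of this interval is exactly the hypothesis of Theorem \ref{thm:D:torus}, so that theorem applies and gives $D(k,(\Z/k\Z)^2)=1-\frac{2}{k}$.

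Next I would invoke the tiling argument. Choose $S\in\mathcal{S}_k((\Z/k\Z)^2)$ of maximum density, regard it as a subset of $[k]^2$, and tile it over $[nk]^2$. Every $k$-line of $[nk]^2$ reduces modulo $k$ to a $k$-line of the torus, so the tiled set still meets each $k$-line in at most $k-2$ points; hence it is near $k$-line avoiding. Its density is exactly $|S|/k^2$. Even if the tiled set is not saturated, we may extend it to a saturated configuration without lowering its size, so
\begin{align*}
    D(k,[nk]^2)\geq \frac{|S|}{k^2}=D(k,(\Z/k\Z)^2)=1-\frac{2}{k}.
\end{align*}
By Propositions \ref{limit_existence} and \ref{limit_uniqueness}, letting $n\to\infty$ along the sequence $[nk]^2$ gives $D(k,\Z^2)\geq 1-\frac{2}{k}$. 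This contradicts the assumption that $D(k,\Z^2)<1-\frac{2}{k}$, and the corollary follows.

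The only step that needs real care is justifying the tiling. One has to check that torus near-line avoidance transfers to the planar board; this holds because planar $k$-lines project onto torus $k$-lines of the same direction. One also has to note that maximality is irrelevant, since passing to a saturated superset only increases the density. Everything else is a direct appeal to Theorem \ref{thm:D:torus}, so I expect no significant obstacle.
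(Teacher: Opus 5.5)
Your proposal is correct and matches the paper's argument: you use Theorem \ref{thm:D:torus} to get $D(k,(\Z/k\Z)^2)=1-\frac{2}{k}$, then tile a maximum-density torus configuration over $[nk]^2$ to get $D(k,\Z^2)\geq D(k,(\Z/k\Z)^2)$, which contradicts $D(k,\Z^2)<1-\frac{2}{k}$. You only need the tiled set to be near $k$-line avoiding and then pass to a saturated superset, which is slightly more careful than the paper. The paper asserts the tiled set is itself saturated in $[nk]^2$, and that can fail near the corners, where a short diagonal cannot lift to a $k$-line.
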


\section{Results for Hypercubes}\label{section4}

In this section, we study hypercubes $[k]^d$ for $d\in\N$. We obtain bounds on $D(k,[k]^d)$ with gap $Ck^{-2}$, where $C$ is a constant depending only on $d$ (Theorem~\ref{thm4}), and derive exact values of $d(k,[k]^d)$ (Theorem~\ref{thm5}).

\subsection{Bounds on $D(k,[k]^d)$}
The upper bound is simple. As $[k]^d$ can be partitioned into a number of $k$-lines, by Lemma \ref{lem:D:upper bound}, we have $D(k,[k]^d)\leq 1-\frac{2}{k}$. The lower bound follows the spirit of proof of Lemma \ref{lem:D:lower bound}. However, instead of removing two zero sets from the board, we first partition the board into hypercubes, then remove a zero set from each part.

\begin{lemma}\label{lem:k^d:D:lower bound}
    For every $d\in\N$, there are constants $C=C(d)>0$ and $N=N(d)\in\N$ such that
        \begin{align*}
            D(k,[k]^d)\geq 1-\frac{2}{k}-\frac{C}{k^2} 
        \end{align*}
        holds for $k\geq N$.
\end{lemma}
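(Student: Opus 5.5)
The plan is a direct algebraic construction in the spirit of the proof of Lemma \ref{lem:D:lower bound}: remove a single zero set of a linear form modulo a carefully chosen $m\le k/2$. This even avoids the partition into subcubes. The goal is a set $T\subseteq[k]^d$ that meets every $k$-line of $[k]^d$ in at least two points and has density $\frac1m\le\frac2k+\frac{C}{k^2}$. Then $[k]^d\setminus T$ is a configuration, and any saturated configuration containing it witnesses the bound $D(k,[k]^d)\ge 1-\frac1m$.

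\emph{Step 1 (the linear form).} Set $a:=(1,3,3^2,\dots,3^{d-1})$ and $N:=\frac{3^d-1}{2}$. By uniqueness of balanced ternary expansions, for every direction $\Vec{d}\in\{-1,0,1\}^d\backslash\{0\}$ the integer $c_{\Vec d}:=\gen{a,\Vec d}$ is nonzero and satisfies $1\le|c_{\Vec d}|\le N$.

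\emph{Step 2 (the modulus).} Choose $m$ to be the largest integer with $m\le k/2$ and $m\equiv 1\pmod{N!}$. Then $m\ge \frac k2-N!$, and $m$ is coprime to every integer in $[1,N]$, in particular to every $c_{\Vec d}$. We take $k\ge N(d):=4\cdot N!$, so that $m\ge k/4>0$. Define $T:=\{x\in[k]^d:\gen{a,x}\equiv0\pmod m\}$.

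\emph{Step 3 (every $k$-line meets $T$ twice).} Let $L=\{x+t\Vec d:t\in[0,k-1]\}$ be a $k$-line. Then $\gen{a,x+t\Vec d}=\gen{a,x}+tc_{\Vec d}$. Since $c_{\Vec d}$ is invertible mod $m$, exactly one $t$ in each block of $m$ consecutive values lies in $T$. As $k\ge 2m$, we get $|L\cap T|\ge\lfloor k/m\rfloor\ge 2$, so $[k]^d\setminus T\in\mathcal C_k([k]^d)$.

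\emph{Step 4 (density count).} For each fixed choice of the last $d-1$ coordinates, the first coordinate runs over $[k]$, and the condition reads $x_1\equiv r\pmod m$ for some residue $r$. So $T$ has at most $\lceil k/m\rceil$ points on each such row, giving
\begin{align*}
\frac{|T|}{k^d}\le \frac{\lceil k/m\rceil}{k}\le\frac1m+\frac1k .
\end{align*}
The extra $\frac1k$ is too large, and this is the main obstacle: the ceiling error must be handled. Since $m\le k/2$, we have $\lfloor k/m\rfloor\ge2$, and an upper bound of order $\frac1m$ is needed, not $\frac1m+\frac1k$.

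The fix I would try first is to write $k=qm+s$ with $0\le s<m$. If $s>0$, choose the set $T$ differently: replace $T$ by $T':=T\cap\{x:x_1\le qm\}\cup(\text{something on the last } s \text{ slabs})$. More cleanly, note that with $m\ge\frac k2-N!$ we have $q=2$ and $s=k-2m\le 2N!$. On each row the count is then $\lceil k/m\rceil\le 3$, but it equals $3$ only when $r\le s$, i.e.\ for at most a fraction $s/m=O(1/k)$ of the residues. Averaging over rows, $\gen{a,x}$ is equidistributed mod $m$ as the other coordinates vary, because the coefficient $1$ of $x_1$ alone already makes the row residue $r$ shift uniformly. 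This gives
\begin{align*}
\frac{|T|}{k^d}\le\frac{2}{k}+\frac{s}{m}\cdot\frac1k+O\Big(\frac1{k^2}\Big)=\frac1m\cdot\frac{2m}{k}+O\Big(\frac1{k^2}\Big)\le\frac2k+\frac{C}{k^2},
\end{align*}
with $C$ depending only on $N!$, hence only on $d$. (Equivalently, $|T|\approx k^d/m$ up to a relative error $O(1/k)$, and $\frac1m\le\frac2k+\frac{8N!}{k^2}$ for $k\ge 4N!$.)

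Finally, extend $[k]^d\setminus T$ to a saturated configuration $S\in\mathcal S_k([k]^d)$. Then $D(k,[k]^d)\ge\frac{|S|}{k^d}\ge1-\frac{|T|}{k^d}\ge1-\frac2k-\frac{C}{k^2}$ for all $k\ge N(d)$, which is the statement of Lemma \ref{lem:k^d:D:lower bound}.
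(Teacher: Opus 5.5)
Your argument is correct in substance, but it takes a genuinely different and more direct route than the paper.

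\textbf{Comparison with the paper.} The paper uses the all-ones form $\langle(1,\dots,1),x\rangle$ modulo an $\ell\le k/2$ coprime to every element of $[d]$. That form vanishes on mixed-sign directions, so it only controls $\ell$-lines with direction in $\{0,1\}^d$. The paper therefore tiles $[k]^d$ by reflected and truncated copies of the block $[0,\ell-1]^d\backslash Z(1)$ placed around the core $[\ell,k-\ell+1]^d$. It then proves (Claim \ref{fully_through}) that every $k$-line runs fully through two blocks, in each of which it becomes a nonnegative-direction $\ell$-line. The density bound $\geq 1-\frac1\ell$ follows immediately by partitioning each block into $\ell$-lines.

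Your balanced-ternary form $(1,3,\dots,3^{d-1})$ is nonzero on every direction in $\{-1,0,1\}^d\backslash\{0\}$. So a single global zero set modulo $m\le k/2$ meets every $k$-line at least twice, with no reflection or gluing needed. The costs are:
\begin{itemize}
\item $m$ must be coprime to all integers up to $M:=\frac{3^d-1}{2}$ (your $N$, which clashes with $N(d)$; please rename it), rather than only up to $d$. Your constant $C$ is then of order $M!$ instead of $8\,d!$. Powers of $2$, $(1,2,\dots,2^{d-1})$, would also work and give $|c_{\Vec d}|\le 2^d-1$.
\item Since $m\nmid k$, you need a counting argument for $|T|$, which the paper's block partition gets for free.
\end{itemize}

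\textbf{The counting step needs repair as written.} The row residue $r\equiv-\sum_{i\ge2}3^{i-1}x_i \pmod m$ does not involve $x_1$. So ``the coefficient $1$ of $x_1$'' says nothing about how $r$ is distributed over rows. Use $x_2$ instead:
\begin{itemize}
\item For $d\ge2$, fix $x_3,\dots,x_d$. Then $r$ is affine in $x_2$ with slope $-3$, which is invertible mod $m$ because $3\le M$.
\item As $x_2$ runs over the $k=2m+s$ consecutive values of $[k]$, each residue occurs at most $3$ times. So at most $3s$ rows in that plane have $r\in\{1,\dots,s\}$, i.e.\ contain $3$ points of $T$.
\item Hence $|T|\le k^{d-2}(2k+3s)$, and
\[
\frac{|T|}{k^d}\le\frac2k+\frac{3s}{k^2}<\frac2k+\frac{6M!}{k^2}.
\]
\item For $d=1$ there is a single row with $r\equiv0$, containing exactly $2$ points.
\end{itemize}

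Two smaller fixes:
\begin{itemize}
\item $q=\lfloor k/m\rfloor=2$ requires $m>k/3$. This follows from $m>\frac k2-M!$ only when $k\ge 6M!$, so take $N(d)\ge 6M!$ rather than $4M!$.
\item The displayed ``equality'' $\frac2k+\frac{s}{mk}=\frac1m\cdot\frac{2m}{k}$ is false. Drop it, since $\frac{s}{mk}=O(k^{-2})$ already gives the bound.
\end{itemize}
With these repairs, your proof establishes the lemma with $C(d)=6M!$.
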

\begin{proof}[Proof of Lemma \ref{lem:k^d:D:lower bound}]
    Let $\ell=\ell(k,d)$ be the largest integer not greater than $\frac{k}{2}$ such that it is coprime with $i$ for every $i\in [d]$. Clearly, the difference btween $\frac{k}{2}$ and $\ell$ is not greater than $d!$. Like the \hyperref[Proof of Theorem D:lower]{proof of Lemma \ref{lem:D:lower bound}}, for every $r\in\Z$, we define
    \begin{align*}
        Z(r):=\{\ \Vec{x}\in \Z^d\ :\ \gen{(1,1,\cdots,1),\Vec{x}}=r\pmod{\ell}\}.
    \end{align*}
    We claim that, for every $\ell$-line $L=x+[0,\ell-1]\Vec{d}$ on $\Z^d$ with $\Vec{d}\in\{0,1\}^d\backslash\{0\}$, we have $|L\cap Z(1)|=1$.\label{claim} Recall that, as in the \hyperref[Proof of Theorem D:lower]{proof of Lemma \ref{lem:D:lower bound}}, we have
    \begin{align*}
        |L\cap Z(r)|=1\ \ \text{ if and only if}\ \  \gcd(\gen{(1,1,\cdots,1),\Vec{d}},\ell)=1.
    \end{align*}
    Note that, for $\Vec{d}\in\{0,1\}^t\backslash\{0\}$, we have $\gen{(1,1,\cdots,1), \Vec{d}}=\normL{\Vec{d}}_1\in [d]$. Then, as $\ell$ is coprime with $[d]$, the claim is proved.
    Set $B=B(\ell,d):=[0,\ell-1]^d\backslash Z(1)$. In the following, we will use projections and reflections of $B$ as the building blocks to form a configuration $S^\star=S^\star(k,d)\in\mathcal{C}_k([k]^d)$ such that
    \begin{align*}
        \frac{|S^\star|}{k^d}\geq 1-\frac{2}{k}-\frac{4d!}{(k-2d!)k}.
    \end{align*}
    With such configuration, let $C(d):=8d!$ and $N(d):=4d!$, then we derive
    \begin{align*}
        D(k,[k]^d)\geq \frac{|S^\star|}{k^d}\geq 1-\frac{2}{k}-\frac{4d!}{(k-2d!)k}\geq 1-\frac{2}{k}-\frac{C}{k^2}
    \end{align*}
    for $k\geq N$, closing the proof. \qedhere
    \subsubsection*{Building up the configuration $S^\star$}
    We start with defining the core $X:=[\ell,k-\ell+1]^d$, a hypercube centered in $[k]^d$. Also, we define its \emph{interior} $\text{int}X:=[\ell+1,k-\ell]^d$ and \emph{boundary} $\partial X:=X\backslash \text{int}X$. For each point $x$ in the core, we paste a modified building block $\phi(x)$ on it. The map $\phi$ is defined by
    \begin{align*}
        \phi:X&\to \left\{2^{\Z^d}\to 2^{\Z^d}\right\}\\
        x&\mapsto \left[Y\mapsto R_{\text{rev}(x)}(Y)\cap E_{\text{int}(x)}\right],
    \end{align*}
    where $\text{rev}(x):=x^{-1}(\ell)\subseteq [d]$ and $\text{int}(x):=x^{-1}([\ell+1,k-\ell])\subseteq [d]$ are the set of reversing and interior indices respectively; $R_I$ negates the $i$-th coordinate for each $i\in I$, and $E_J$ is the subspace of $\Z^d$ with the $j$-th coordinate zero for each $j\in J$. Define
    \begin{align*}
        S^\star:=\bigsqcup\limits_{x\in X} (x+\phi(x)(B))\subseteq [k]^d.
    \end{align*}
    Figure \ref{fig:example_6-hypercube-D-construction} displays $S^\star$ and several types of building blocks for $k=8$ and $d=2$.%
    \begin{center}
        \begin{figure}[htp!]
        \centering
        \begin{minipage}{0.3\textwidth}
        
        \centering
        \ytableaushort[]{
{*(lightgray)}{*(lightgray)}{*(white)},
{*(white)}{*(lightgray)}{*(lightgray)},
{*(black)\textcolor{white}{w}}{*(white)}{*(lightgray)}
        }
        \caption*{$\phi(w)(B)=B\cap E_{\{1,2\}}$}
        
        \centering
        \vspace{1em}
        \ytableaushort[]{
{*(white)}{*(lightgray)}{*(lightgray)},
{*(lightgray)}{*(lightgray)}{*(white)},
{*(black)}{*(white)}{*(black)\textcolor{white}{z}}
        }
        \caption*{$\phi(z)(B)=R_1(B)\cap E_2$}
        \end{minipage}
        \begin{minipage}{0.3\textwidth}
        \centering
        \ytableausetup{boxsize=1em}
        \ytableaushort[]{
{*(white)}{*(black)}{*(black)}{*(black)}{*(black)}{*(gray)}{*(gray)}{*(white)},
{*(black)}{*(black)}{*(white)}{*(white)}{*(white)}{*(white)}{*(gray)}{*(gray)},
{*(black)}{*(white)}{*(black)}{*(black)}{*(black)}{*(gray)x}{*(white)}{*(gray)},
{*(black)}{*(white)}{*(black)}{*(gray)w}{*(black)}{*(black)}{*(white)}{*(black)},
{*(gray)}{*(white)}{*(gray)z}{*(black)}{*(black)}{*(black)}{*(white)}{*(black)},
{*(black)}{*(white)}{*(black)}{*(black)}{*(black)}{*(gray)y}{*(white)}{*(gray)},
{*(black)}{*(black)}{*(white)}{*(white)}{*(white)}{*(white)}{*(gray)}{*(gray)},
{*(white)}{*(black)}{*(black)}{*(black)}{*(black)}{*(gray)}{*(gray)}{*(white)}
        }
        \ytableausetup{boxsize=normal}
        \caption{$S^\star(8,2)$}
        \label{fig:example_6-hypercube-D-construction}
        \end{minipage}
        \begin{minipage}{0.3\textwidth}
        \centering
        \ytableaushort[]{
{*(black)}{*(black)}{*(white)},
{*(white)}{*(black)}{*(black)},
{*(black)\textcolor{white}{x}}{*(white)}{*(black)}
        }
        \caption*{$\phi(x)(B)=B$}
        \vspace{1em}
        \ytableaushort[]{
{*(black)\textcolor{white}{y}}{*(white)}{*(black)},
{*(white)}{*(black)}{*(black)},
{*(black)}{*(black)}{*(white)}
        }
        \caption*{$\phi(y)(B)=R_2(B)$}
        \end{minipage}
    \end{figure}
    \end{center}
    In this example, for $x$, we paste the original $B$ on it; for $y$, the block but negating the $2$-th coordinate is pasted; for $z$ and $w$, in order to fit in the space $[k]^d$, the blocks are truncated on $E_2$ and $E_{\{1,2\}}$, respectively. 
    
    Before we verify that $S^\star$ is a configuration, we first lower bound the density of this subset. Given $x\in X$, if $x\in \text{int}X$, $\phi(x)(B)$ is a single point and $|\text{int}(x)|=d$. If $x\in\partial X$, the ground space $\phi(x)([0,\ell-1]^d)$ can be partitioned into $\ell^{d-1-|\text{int}(x)|}$ $\ell$-lines of direction in $\{0,1\}^d\backslash\{0\}$. By \hyperref[claim]{the claim above}, we know $|L\cap \phi(x)(B)|=\ell-1$ for every $L$ in the partition, hence we obtain $|\phi(x)(B)|= (\ell-1)\ell^{d-1-|\text{int}(x)|}$. In either case, we have
    \begin{align}
        d(x):=\frac{|\phi(x)(B)|}{|\phi(x)([0,\ell-1]^d)|}\geq \frac{(\ell-1)\ell^{d-1-|\text{int}(x)|}}{\ell^{d-|\text{int}(x)|}}=1-\frac{1}{\ell}.\label{ineq:lowerbound of buildingblock}
    \end{align}
    As the ground spaces $\{\phi(x)([0,\ell-1]^d)\}_{x\in X}$ form a partition of $[k]^d$, $\frac{|S^{\star}|}{k^d}$ can be regarded as the weighted-average of $\{d(x)\}_{x\in X}$ by setting the weight of $x$ to be $|\phi(x)([0,\ell-1]^d)|$. Then (\ref{ineq:lowerbound of buildingblock}) implies that $\frac{|S^{\star}|}{k^d}\geq 1-\frac{1}{\ell}$. Hence, by $\ell>\frac{k}{2}-d!$, we obtain the desired lower bound
    \begin{align*}
        \frac{|S^\star|}{k^d}\geq 1-\frac{1}{\ell}=1-\frac{2\ell}{\ell k}-\frac{k-2\ell}{\ell k}\geq 1-\frac{2}{k}-\frac{4d!}{(k-2d!)k}.
    \end{align*}
    \subsubsection*{Near $k$-line Avoidance}
    \begin{claim}\label{fully_through}
        For every $k$-line $L$ in $[k]^d$, there are two points $x,y\in X$ such that $$|x+\phi(x)([0,\ell-1]^d)\cap L|=|y+\phi(y)([0,\ell-1]^d)\cap L|=\ell.$$
        In particular, $R_{\text{rev}(x)}^{-1}(L-x)$ and $R_{\text{rev}(y)}^{-1}(L-y)$ are $\ell$-lines with direction in $\{0,1\}^d\backslash\{0\}$.
    \end{claim}
    With this claim, it is sufficient to show the near $k$-line forbiddenness of $S^{\star}$. Given a $k$-line $L$, let $x,y$ be such points in Claim \ref{fully_through}. Following \hyperref[claim]{the previous claim}, we can see that $S^{\star}$ misses two points on $L$, one from $x+\phi(x)([0,\ell-1]^d\backslash B)$ and the other from $y+\phi(y)([0,\ell-1]^d\backslash B)$. As a consequence, $S^{\star}$ contains no near $k$-lines, i.e., $S^{\star}\in\mathcal{C}_k([k]^d)$. \qedhere
    \begin{proof}[Proof of Claim \ref{fully_through}]
    Say $L=v+[0,k-1]\Vec{d}$, where $\Vec{d\in\{-1,0,1\}^d\backslash \{0\}}$. Define $x,y\in [k]^d$ by setting
    \begin{align*}
        x_i:=\begin{cases}
            \ell&\text{ if $d_i=1$ or ($d_i=0$ and $v_i\leq\ell$)},\\
            k-\ell+1&\text{ if $d_i=-1$ or ($d_i=0$ and $v_i\geq k-\ell+1$)},\\
            v_i&\text{ otherwise},
        \end{cases}
    \end{align*}
    and
    \begin{align*}
        y_i:=\begin{cases}
            k-\ell+1&\text{ if $d_i=1$ or ($d_i=0$ and $v_i\leq\ell$)},\\
            \ell&\text{ if $d_i=-1$ or ($d_i=0$ and $v_i\geq k-\ell+1$)},\\
            v_i&\text{ otherwise},
        \end{cases}
    \end{align*}
    for every $i\in [d]$. We now show that the $v+[0,\ell-1]\Vec{d}$ is contained in $x+\phi(x)([0,\ell-1]^d)$. We show the containment holds coordinatewisely. Recall that $\phi(x)([0,\ell-1]^d)$ is a hyper-rectangle, hence there are $a_i<b_i\in \Z$ such that $\phi(x)([0,\ell-1]^d)=\prod_{i=1}^d [a_i,b_i]$. Denote $v_t:= v+t\Vec{d}$. for each $t\in [0,\ell-1]$. Consider three cases:
    \begin{enumerate}
        \item[$(d_i=1)$] In this case, we have $x_i=\ell$ and hence $\phi(x)$ negates the $i$-th coordinate. Thus $$[a_i,b_i]=\ell-[0,\ell-1]=[1,\ell].$$

        Noting that $(v_t)_i=t+1$ in this case, we have $(v_t)_i\in [a_i,b_i]$ for each $t\in [0,\ell-1]$.
        \item[$(d_i=-1)$] Similar to the first case, as $x_i=k-\ell+1$, $\phi(x)$ does nothing on the $i$-th coordinate. Hence
        $$[a_i,b_i]=(k-\ell+1)+[0,\ell-1]=[k-\ell+1,k].$$
        Observing that $(v_t)_i=k-t$, then we get $(v_t)_i\in [a_i,b_i]$ for each $t\in[0,\ell-1]$.
        
        \item[$(d_i=0)$] By the discussion above, we already have
        \begin{align*}
            [a_i,b_i]=\begin{cases}
                [1,\ell]&\text{ if }\ v_i\leq \ell,\\
                [k-\ell+1,k]&\text{ if }\ v_i\geq k-\ell   +1.
            \end{cases}
        \end{align*}
        For the case $\ell<v_i<k-\ell+1$, $\phi(x)$ eliminates all but points with the $i$-th coordinate zero. Therefore
        \begin{align*}
            [a_i,b_i]=v_i+[0,0]=[v_i,v_i].
        \end{align*}
        In either case, $v_i\in [a_i,b_i]$. Noting that $(v_t)_i=v_i$ for all $t$, then the result follows.
    \end{enumerate}
    Due to the definition of $x$, we have $R_{\text{rev}(x)}^{-1}(\Vec{d})=R_{\text{rev}(x)}(\Vec{d})\subseteq \{-1,0\}^d$. Then, by changing the starting point of the $R_{\text{rev}(x)}^{-1}(v+[0,\ell-1]\Vec{d}-x)$, it can be regarded as a $\ell$-line with direction in $\{0,1\}^d$. One can show that $v+[k-\ell+1,k]\Vec{d}$ is contained in $y+\phi(y)([0,\ell-1]^d)$ and it is a desired $\ell$-line in a similar way.
    \end{proof}
\end{proof}

\subsection{Determining $d(k,[k]^d)$}\label{d(k,[k]^d)}
In this section, we will provide the exact value of $d(k,[k]^d)$. For the upper bound, we consider a smaller hypercube $S:=[2,k-1]^d$. Clearly, $S$ contains no near $k$-line since it excludes two endpoints from every $k$-line. It remains to show that $S$ is saturated. Given a point $x\in [k]^d\backslash S$. Consider the $k$-line $L:=x+[0,k-1]\Vec{d}_x$, where the direction $\Vec{d}$ is defined by
    \begin{align*}
        \Vec{d}_x:= \sum_{i: x_i=1} e_i-\sum_{i:x_i=k}e_i.
    \end{align*}
    We have $(S\cup\{x\})\cap L=x+[0,k-2]\Vec{d}_x$, which means $S\cup\{x\}$ contains a near $k$-line. Therefore
    \begin{align*}
        d(k,[k]^d)\leq \frac{|S|}{k^d}=\frac{(k-2)^d}{k^d}=\left(1-\frac{2}{k}\right)^d.
    \end{align*}

On the other hand, let $S_0\in\mathcal{S}_k([k]^d)$ be a saturated configuration reaching minimum density. Define $X:= S\backslash S_0$ and $Y:=S_0\backslash S$. Obviously, we have $|Y|\leq |X|$. We now claim there is an injection $f:X\to Y$. If so, then we have $|X|\leq |Y|$ and hence $|S_0|=|S|=(k-2)^d$, which finishes the proof. For every $x\in X$, by the fact that $x\not\in S_0$ and $S_0$ is saturated, there is a $k$-line $L=L(x)$ such that $|L\backslash S_0|=\{x,y\}$. Consider two cases:
\begin{enumerate}[(i)]
    \item Suppose $y\not\in S$, there is only one point $z$ in $L\cap Y=(L\backslash S)\cap S_0$. Set $f(x):=z$.
    \item Suppose $y\in S$, there are two points $z$ and $w$ in $L\cap Y$. WLOG, let $z$ be the one that $y$ is not between $x$ and $z$. Then we set $f(x):=z$.
\end{enumerate}

To verify the injectivity, given $x_1,x_2\in X$ with $x_1\neq x_2$. If $L(x_1)=L(x_2)$, let $z$ and $w$ be that in (ii) when considering $x_1$, then we have 
$$f(x_1)=z\neq w=f(x_2).$$
If $L(x_1)\neq L(x_2)$, following Lemma \ref{unique line in [k]^d}, we have $(L_1\backslash S)\cap(L_2\backslash S)=\emptyset$. Then we have $f(x_1)\neq f(x_2)$ due to the facts $f(x_i)\in L(x_i)\backslash S$ for $i=1,2$. \qed

\begin{lemma}\label{unique line in [k]^d}
    Given $x\in [k]^d\backslash S$, the $k$-line $L=x+[0,k-1]\Vec{d}_x$ is the only $k$-line such that $x\in L$ and $L\cap S\neq \emptyset$.
\end{lemma}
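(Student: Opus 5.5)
We argue coordinatewise. Write $I_1(x):=\{i:x_i=1\}$, $I_k(x):=\{i:x_i=k\}$ and $I_0(x):=[d]\setminus(I_1\cup I_k)$, so that $\Vec{d}_x=\sum_{i\in I_1}e_i-\sum_{i\in I_k}e_i$. Since $x\notin S=[2,k-1]^d$, the set $I_1\cup I_k$ is nonempty, so $\Vec{d}_x\neq 0$. Along $\Vec{d}_x$, every coordinate in $I_1$ runs through $1,2,\dots,k$, every coordinate in $I_k$ runs through $k,\dots,1$, and every coordinate in $I_0$ stays fixed in $[2,k-1]$. Hence $L=x+[0,k-1]\Vec{d}_x$ lies in $[k]^d$, so it is a $k$-line. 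Its interior points $x+t\Vec{d}_x$ with $t\in[1,k-2]$ have all coordinates in $[2,k-1]$, so they lie in $S$. This proves existence: $x\in L$ and $L\cap S\neq\emptyset$.

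For uniqueness, let $L'$ be any $k$-line in $[k]^d$ with $x\in L'$ and $L'\cap S\neq\emptyset$, and write $L'=p+[0,k-1]\Vec{e}$ with $\Vec{e}\in\{-1,0,1\}^d\setminus\{0\}$. The key observation is that a $k$-line in $[k]^d$ uses every nonzero coordinate of its direction fully. If $e_i\neq 0$, then coordinate $i$ takes all values $1,\dots,k$ along $L'$, so $p_i\in\{1,k\}$ and $p_i$ is determined by the sign of $e_i$. If $e_i=0$, then coordinate $i$ is constant along $L'$.

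We now treat the coordinates in turn.
\begin{enumerate}[(i)]
    \item Take $i\in I_1\cup I_k$. If $e_i=0$, then coordinate $i$ is constantly $x_i\in\{1,k\}$ along $L'$, so $L'$ misses $S$, a contradiction. Hence $e_i\neq 0$ for all such $i$.
    \item Take $i\in I_0$. If $e_i\neq 0$, then $x$, being a point of $L'$, would have $x_i$ at some position $t$ with $x_i=t+1$ or $x_i=k-t$. We must show this forces $e_i=0$.
\end{enumerate}

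Case (ii) is where the real constraint lies, and it comes from synchronisation of the nonzero coordinates. All nonzero coordinates of $\Vec{e}$ move in lockstep, so at the point $x=p+t\Vec{e}$ every $i$ with $e_i\neq0$ has $x_i\in\{t+1,k-t\}$. Pick $j\in I_1\cup I_k$, which we showed has $e_j\neq 0$. Then $x_j\in\{1,k\}$ forces $t\in\{0,k-1\}$. Consequently every nonzero coordinate of $x$ along $\Vec{e}$ also lies in $\{1,k\}$, which contradicts $x_i\in[2,k-1]$ for $i\in I_0$. Therefore $\operatorname{supp}\Vec{e}=I_1\cup I_k$, and $x$ is an endpoint of $L'$.

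It remains to fix the signs. Because $x$ is an endpoint, $L'=x+[0,k-1]\Vec{e}'$ with $\Vec{e}'=\pm\Vec{e}$ oriented away from $x$. For the line to stay inside $[k]^d$, each $i$ with $x_i=1$ needs $e'_i=1$, and each $i$ with $x_i=k$ needs $e'_i=-1$. Thus $\Vec{e}'=\Vec{d}_x$ and $L'=L$.

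The main obstacle is the synchronisation step: it is what rules out a line through $x$ that moves coordinates of $I_0$ while still reaching $S$. With $t\in\{0,k-1\}$ established there, the rest of the argument is routine.
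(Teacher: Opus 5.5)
Your proof is correct and follows essentially the same route as the paper: a coordinate $j$ with $x_j\in\{1,k\}$ must move along any $k$-line through $x$ that meets $S$, which forces $x$ to be an endpoint (the paper phrases this step contrapositively), and the direction is then pinned down coordinate by coordinate to be $\Vec{d}_x$. You also verify explicitly that $L$ is a $k$-line meeting $S$ (valid for $k\geq 3$, consistent with the paper's definition of $k$-boards), which the paper checks only in the surrounding text.
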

\begin{proof}
    Fix $x\in [k]^d\backslash S$. Let $L'=y+[0,k-1]\Vec{d}$ be a $k$-line in $[k]^d$ such that $x=y+a\Vec{d}$. Suppose $x$ is not an endpoint of $L'$, i.e., $a\neq 0$ and $a\neq k-1$. Since $x\in [k]^d\backslash S$, there is an index $i_0\in [d]$ such that $x_{i_0}\in \{1,k\}$. We note that $d_{i_0}=0$. Otherwise, one of $$\{y_{i_0},y_{i_0}+a,y_{i_0}+(k-1)\}\subseteq \{1,k\}\ \ \ \text{ or }\ \ \ \{y_{i_0},y_{i_0}-a,y_{i_0}-(k-1)\}\subseteq \{1,k\}$$
    would hold, which is impossible. Hence $z_{i_0}=x_{i_0}\not\in [2,k-1]$ for every $z\in L'$, and thus $L'\cap S=\emptyset$.
    
    It follows that if $L'\cap S\neq\emptyset$, then $x$ must be an endpoint of $L'$, and we may write $L'=x+[0,k-1]\Vec{d'}$. We now determine $\Vec{d'}$ coordinate-wisely. For each $i\in [d]$, consider three cases:
    \begin{enumerate}
        \item If $x_i=1$, then $d'_i\neq 1$ implies either $L'\cap S=\emptyset$ or $x+\Vec{d'}\not\in [k]^d$. Hence $d'_i=1$.
        \item If $x_i=k$, a symmetric argument shows that $d'_i=-1$.
        \item If $x_i\in [2,k-1]$, then $d'_i\neq 0$ would imply $(x+(k-1)\Vec{d'})_i\not\in [k]$, which contradicts that $L'\subseteq [k]^d$. Hence $d_i=0$.
    \end{enumerate}
    Therefore, we obtain $$\Vec{d'}=\sum_{i:x_i=1}e_i-\sum_{i:x_i=k}e_i=\Vec{d}_x,$$ and hence $L'=L$. 
\end{proof}

\section{Conjectures and Extensions}\label{section5}

Before turning to conjectures, we summarize the current results. The best known bounds and exact values are listed below:

\begin{center}
{\renewcommand{\arraystretch}{1.9}
\setlength{\tabcolsep}{4pt}
\small
\scalebox{0.96}{
\begin{tabular}{@{}|C||C|C|C||C|C|C|@{}}\hline
    B&\multicolumn{3}{C||}{d(k,B)}&\multicolumn{3}{C|}{D(k,B)}\\\hline\hline
    \multirow{2}{*}{$\Z^2$}&k=3&4\leq k\leq 8&9\leq k&k=3&3\not\mid k& 3\mid k\\\cline{2-7}
    &\frac{1}{17}&\frac{k-2}{k+14}\leq\cdot\leq \frac{(k-2)^2}{k^2+2k+2}&1-\frac{16}{k}\lesssim\cdot\lesssim 1-\frac{8}{k}&\frac{1}{5}&1-\frac{2}{k}&1-\frac{2}{k-1}\leq\cdot\leq 1-\frac{2}{k}\\\hline\hline
    [k]^d&\multicolumn{3}{C||}{(1-\frac{2}{k})^d}&\multicolumn{3}{C|}{1-\frac{2}{k}-\frac{C(d)}{k^2}\leq\cdot\leq 1-\frac{2}{k}}\\\hline
\end{tabular}
}
}
\end{center}

In addition, we have that $D(k,\Z^2)<1-\frac{2}{k}$ implies $D(k,\Z^2) \leq 1-\frac{2}{k}-\frac{1}{2k^2-k}$.

\subsection{Conjectures}

\subsubsection{The Grid}
From Lemma \ref{lem:D:upper bound} and the proof of Theorem \ref{thm:D:torus}, the inequalities
\begin{align*}
    D(k,(\Z/k\Z)^2)\leq D(k,\Z^2)\leq 1-\frac{2}{k}
\end{align*}
hold for every $k\in\Z_{\geq 2}$. Note that, when $k$ is not a multiple of $3$, the configuration $S_k$ in the proof of Lemma \ref{lem:D:lower bound} avoids not only $k$-lines in $\Z^2$ but also $k$-lines in $(\Z/k\Z)^2$ by the construction, which leads to
\begin{align*}
    1-\frac{2}{k}=\frac{|S_k|}{k^2}\leq D(k,(\Z/k\Z)^2)\leq D(k,\Z^2)\leq 1-\frac{2}{k}
\end{align*}
and forces the middle two terms, $D(k,(\Z/k\Z)^2)$ and $D(k,\Z^2)$, to be $1-\frac{2}{k}$. On the other hand, it is easy to see that $D(3,(\Z/3\Z)^2)=\frac{1}{9}$ since every two points in $(\Z/3\Z)^2$ have a common ($3$-)line. So both inequalities are strict when $k=3$. Further, by exhaustive searching, we have $D(6,(\Z/6\Z)^2)<1-\frac{2}{6}$, which means at least one of these inequalities is strict. It motivates the following conjectures.

\begin{conjecture}\label{conj:D(k,Z2)} For every $k\in 3\N$, $D(k,\Z^2)<1-\frac{2}{k}$.
\end{conjecture}
\begin{conjecture}\label{conj:D(k,Z/kZ)} For every $k\in 3\N$, $D(k,(\Z/k\Z)^2)<D(k,\Z^2)$.
\end{conjecture}

Suppose the Conjecture \ref{conj:D(k,Z2)} is true, then the inequality
\begin{align*}
    D(k,\Z^2)\geq D(k,(\Z/k\Z)^2)
\end{align*}
forces the bound $D(k,(\Z/k\Z)^2)< 1-\frac{2}{k}$ to hold for $k\in 3\N$. Conversely, according to Theorem \ref{thm:D:torus}, if $D(k,(\Z/k\Z)^2)<1-\frac{2}{k}$ holds for some $k$, then we must have
\begin{align*}
    D(k,\Z^2)\leq 1-\frac{2}{k}-\frac{1}{2k^2-k}<1-\frac{2}{k}
\end{align*}
for the same $k$. Hence, for each $k\in3\N$, the following are equivalent. 
\begin{enumerate}
    \item[$\bullet$] $D(k,\Z^2)<1-\frac{2}{k}$.
    \item[$\bullet$] $D(k,(\Z/k\Z)^2)<1-\frac{2}{k}$.
    \item[$\bullet$] There is no subset $A\subseteq (\Z/k\Z)^2$ satisfying $|L\cap A|=2$ for every axis-parallel or diagonal line $L$ in $(\Z/k\Z)^2$.
\end{enumerate}

The third statement characterizes when a configuration has density $1-\frac{2}{k}$, where the subset $A$ should be interpreted as its complement. Since this formulation avoids explicit reference to configurations, we restate Conjecture \ref{conj:D(k,Z2)} accordingly.

\begin{conjecture}{(Restatement of Conjecture \ref{conj:D(k,Z2)})}
    For every $k\in 3\N$, there is no subset $A\subseteq (\Z/k\Z)^2$ satisfying $|L\cap A|=2$ for every axis-parallel or diagonal line $L$.
\end{conjecture}

As Theorem \ref{thm2} provides a lower bound $1-\frac{2}{k-1}$ on $D(k,\Z^2)$. If we can show that $D(k,(\Z/k\Z)^2)<1-\frac{2}{k-1}$ for every $k\in3\N\backslash \{3\}$, then Conjecture \ref{conj:D(k,Z/kZ)} holds. Recall that $D(k,(\Z/k\Z)^2)\in\frac{\N}{k^2}$, we have 
\begin{align*}
    D(k,(\Z/k\Z)^2)<1-\frac{2}{k-1} \iff D(k,(\Z/k\Z)^2)< \frac{\lceil k^2(1-\frac{2}{k-1})\rceil}{k^2}.
\end{align*}
In other words, one way to prove Conjecture \ref{conj:D(k,Z/kZ)} is to check that if every saturated configuration in $\mathcal{S}_k((\Z/k\Z)^2)$ has size lesser than $\lceil k^2(1-\frac{2}{k-1})\rceil=k^2-2k-2$. Equivalently, we need to check that, for every saturated configuration, the size of its complement is greater than $2k+2$ or not. Then we give a conjecture.

\begin{conjecture}\label{conj:ult}
    For every $k\in 3\N$, there is no subset $A\subseteq (\Z/k\Z)^2$ with $|A|\leq 2k+2$ satisfying $|L\cap A|\geq 2$ for every axis-parallel or diagonal line $L$. 
\end{conjecture}

By the discussion above, we know Conjecture \ref{conj:ult} implies Conjecture \ref{conj:D(k,Z/kZ)}. Moreover, it also implies Conjecture \ref{conj:D(k,Z2)} due to the restatement.

For the minimum density, recall that $1-16k^{-1}\lesssim d(k,\Z^2)\lesssim 1-8k^{-1}$ as $k\to\infty$. The problem is to characterize such constant.
\begin{question} Determine the constant $c\in [8,16]_{\R}$ such that $d(k,\Z^2)\sim1-ck^{-1}$.
\end{question}
At present, the values near $16$ are less possible. Indeed, if the coefficient were close to $16$, then the average degrees of the two parts of the graph $G$ defined in Lemma \ref{lem:d:lower bound} would have to be close to $k-2$ and $16$, respectively. This would require that, in $G$, the average degree on one side is nearly minimized while that on the other side is nearly maximized, which suggests that such a scenario cannot occur.

\subsubsection{Hypercubes}

For the maximum density, the configuration $S^{\star}$ constructed in the proof of Lemma \ref{lem:k^d:D:lower bound} has density
\begin{align*}
    \frac{|S^{\star}|}{k^d}=1-\frac{2}{k}
\end{align*}
if $\frac{k}{2}\in\N$ and is coprime with $d!$. It suggests the following conjecture.
\begin{conjecture}\label{conj:[k]^d}
    For every $k\in\N_{\geq 2}$ and $d\in\N$, $D(k,[k]^d)=1-\frac{2}{k}$.
\end{conjecture}
Consider the subset $$X_k:=[k]^2\backslash\{(x,y)\in [k]^2: x+y=1\text{ or }0\pmod{k}\}.$$
See Figure \ref{fig:X_6} for illusions of $X_5$ and $X_6$. It is obvious that $X_k\in\mathcal{C}_k([k]^2)$. Hence $$D(k,[k]^2)\geq \frac{|X_k|}{k^2}=\frac{k^2-2k}{k^2}=1-\frac{2}{k},$$
which means Conjecture \ref{conj:[k]^d} is true for $d=2$.
\begin{figure}[htp!]
    \centering
    \ytableausetup{boxsize=0.8em}
    \begin{minipage}{0.3\textwidth}
        \centering
    \ytableaushort[]{
    {*(white)}{*(black)}{*(black)}{*(black)}{*(white)},
    {*(white)}{*(white)}{*(black)}{*(black)}{*(black)},
    {*(black)}{*(white)}{*(white)}{*(black)}{*(black)},
    {*(black)}{*(black)}{*(white)}{*(white)}{*(black)},
    {*(black)}{*(black)}{*(black)}{*(white)}{*(white)}
    }
    \end{minipage}
    \begin{minipage}{0.3\textwidth}
        \centering
    \ytableaushort[]{
    {*(white)}{*(black)}{*(black)}{*(black)}{*(black)}{*(white)},
    {*(white)}{*(white)}{*(black)}{*(black)}{*(black)}{*(black)},
    {*(black)}{*(white)}{*(white)}{*(black)}{*(black)}{*(black)},
    {*(black)}{*(black)}{*(white)}{*(white)}{*(black)}{*(black)},
    {*(black)}{*(black)}{*(black)}{*(white)}{*(white)}{*(black)},
    {*(black)}{*(black)}{*(black)}{*(black)}{*(white)}{*(white)}
    }
    \end{minipage}
    \ytableausetup{boxsize=normal}
    \caption{$X_5$ and $X_6$}
    \label{fig:X_6}
\end{figure}

\newpage
For the minimum density, recall that $d(k,[k]^d)=(1-\frac{2}{k})^d$. In fact, there is a family of configurations reaching minimum density, and this family includes the previous example $[2,k-1]^d$. For each $i\in [\lfloor\frac{k}{2}\rfloor]$, define $$W_i:=([k]\backslash \{i,(k+1)-i\})^d.$$
Given a $k$-line $L=v+[0,k-1]d$ in $[k]^d$, the two points, $v+(i-1)d$ and $v+(k-i)d$, are not contained in $W_i$. Hence $W_i$ misses at least two points from every $k$-line, and therefore is near $k$-line avoiding. On the other hand, given a point $x\in [k]^d\backslash W_i$, consider the $k$-line $$L=v+[0,k-1]\Vec{d},$$ where $v:=x-(i-1)\Vec{d}$ and $\Vec{d}:=\sum_{j:x_j=i}e_j-\sum_{j:x_j=(k+1-i)}e_j$. It is clear that $x=v+(i-1)\Vec{d}$ and $\overline{x}=v+(k-i)\Vec{d}$ are the only points in $L$ not contained in $W_i$. Hence we have $|L\backslash W_i|=2$, and therefore $W_i$ is saturated.

It appears that these are the only configurations reaching minimum density when $d\geq 2$. Indeed, this is true for $k=3$ and all $d\geq 2$, and also for $k=4$ and $d=2,3$. This motivates the following question.

\begin{question}
    For every $d\in\N_{\geq 2}$, are $\{W_i\}_{i\in [\lfloor\frac{k}{2}\rfloor]}$ the only saturated configurations, in $\mathcal{S}_k([k]^d)$, reaching minimum density?
\end{question}

\subsection{Extensions}

\paragraph{Winner Determination} For a fixed board $B$, the monochromatic $k$-in-a-row game is a finite perfect-information game, and hence the outcome is determined: it is either a first-player win (\textbf{FP}), a second-player win (\textbf{SP}), or a draw. In this setting, draws occur if and only if the family of $k$-lines is empty, which is a degenerate case that we typically exclude, so the game is either \textbf{FP} or \textbf{SP}.

Unlike many positional games, the strategy-stealing argument does not apply here, as the additional move available to the first player may be disadvantageous; in particular, the first player cannot always guarantee a win. A natural problem is to determine the winning player for a given board $B$, or more ambitiously, to describe an explicit winning strategy.

For instance, the outcome on $[k]^d$ is completely characterized: it is \textbf{FP} when $k$ is odd and \textbf{SP} when $k$ is even. This result comes from a pairing strategy for the winning player. See pages 46-48 in \cite{ku2025monochromatic} for the construction. 

\paragraph{Biased Game} One natural extension is the \emph{biased $(p:q)$ game}, in which the first and second players claim $p$ and $q$ points per move, respectively. The cases $p\geq k$ or $q\geq k$ are trivial, so we restrict to $\max\{p,q\}<k$. 

A heuristic suggests that the player with the larger bias should have an advantage, and this is indeed the case when the board is sufficiently large. Let Alice and Bob denote the players with biases $\alpha$ and $\beta$, respectively, and suppose $\alpha > \beta$. Consider the quantity $N$, defined as the minimum number of points needed to complete a $k$-line from the current state. Alice's strategy is based on the value of $N$: If $N\leq \alpha$, Alice wins without doubt. If not, Alice claims $\alpha$ points such that $N$ is decreased by $1$. (This can be done once the board size is large enough.) In this setting, Bob cannot win; otherwise, one would obtain a contradiction $\beta \geq N-1 \geq \alpha$ before the very last move. Consequently, the most interesting case is the balanced biased $(p:p)$ game with $1 < p < k$, where the advantage of larger bias disappears.

\paragraph{Multiplayer Games} For $t\in\N_{\geq 3}$, we can consider the $t$-player variant: There are $t$ players. Each player, in turns, claims a position from the board until a $k$-line is created. It comes a natural question: Under this circumstance, which player will win the game? Although it is similar to the $2$-player game, but there are some cases that, at some moment, a player cannot win but their move can determine the winner. For instance, consider the $5$-player monochromatic $3$-in a row played on $[3]^2$: If player $1$ starts with taking $(2,2)$, then player $3$ wins. If player $1$ starts with taking other point, only player $3$, $4$, and $5$ have chance to win. In either case, player $1$ cannot win.
\begin{figure}[htp!]
    \centering
    \ytableausetup{boxsize=1.2em}
    \begin{minipage}{0.2\textwidth}
        \centering
        \ytableaushort[]{
        {*(black)\textcolor{white}{2}}{}{},{}{*(black)\textcolor{white}{1}}{},{}{}{*(black)\textcolor{white}{3}}
        }
    \end{minipage}%
    \begin{minipage}{0.2\textwidth}
        \centering
        \ytableaushort[]{
        {*(black)\textcolor{white}{1}}{}{},{}{*(black)\textcolor{white}{2}}{},{}{}{*(black)\textcolor{white}{3}}
        }
    \end{minipage}%
    \begin{minipage}{0.2\textwidth}
        \centering
        \ytableaushort[]{
        {*(black)\textcolor{white}{1}}{}{},{*(black)\textcolor{white}{3}}{}{*(black)\textcolor{white}{2}},{*(black)\textcolor{white}{4}}{}{}
        }
    \end{minipage}%
    \begin{minipage}{0.2\textwidth}
        \centering
        \ytableaushort[]{
        {*(black)\textcolor{white}{1}}{*(black)\textcolor{white}{4}}{*(black)\textcolor{white}{5}},{}{}{*(black)\textcolor{white}{2}},{}{*(black)\textcolor{white}{3}}{}
        }
    \end{minipage}
    \caption{several configurations}
    \label{multiplayer}
    \ytableausetup{boxsize=normal}
\end{figure}\ \\
Thus, further analysis requires additional restrictions or assumptions on the players.

\paragraph{$s$-Saturated Configurations} We can extract the concept of saturated configuration and quantize it. Given a $k$-board $B$. For every $s\in [k]$, we define the \emph{$s$-configurations}
\begin{align*}
    \mathcal{C}_{k,s}(B):=\{S\subseteq B: |L\backslash S|\geq s\ \forall \text{ $k$-line $L$}\}.
\end{align*}
Then we accordingly define the \emph{saturated $s$-configurations}, $\mathcal{S}_{k,s}(B)$. Further, if $B$ is finite, we also define \emph{$s$-density spectrum}, $\mathcal{D}_{k,s}(B)$, and \emph{extremal $s$-densities}, $D_s(k,B)$ and $d_s(k,B)$. Notably, the original problem is corresponding to the case $s=2$.

Several results can be extended to this setting. For instance, by the same argument as in the proof of Lemma \ref{lem:D:upper bound}, we can obtain
\begin{align*}
    D_s(k,B)\leq 1-\frac{s}{k}+\frac{s|R|}{k|B|}
\end{align*}
for every $s\in[k]$, where $R$ is the complement of union of some disjoint $k$-lines. And, by adapting the proof of Lemma \ref{lem:d:lower bound}, we have
\begin{align*}
    d_s(k,B)\geq \frac{k-s}{sN(B)+(k-s)},
\end{align*}
where $N(B):=\max_{x\in B}|\{\text{$k$-lines containing $x$}\}|$. See pages 54-58 in \cite{ku2025monochromatic} for detailed proofs. However, bounds obtained from specific constructions still require further discussion and modification.

\paragraph{Extremal Board Problem}

The original monochromatic $k$ in a row problem asks how large or small the density of a saturated configuration on a given board can be.A related question is the following: for a fixed board size $n$, which boards maximize or minimize the extremal densities? To formalize this question, we restrict attention to $n$-subsets of $\Z^2$. For every $B\in\binom{\Z^2}{n}$, we regard $B$ as a board by defining $$\mathcal{L}(B):=\{L:L\text{ is a $k$-line on $\Z^2$},\ L\subseteq B\}.$$
Then the question is to determine the terms
\begin{align*}
    D_{\min}(k,n):=\min_{B\in\binom{\Z^2}{n}} D(k,B)\ \ \ \ \text{ and }\ \ \ \ d_{\max}(k,n):=\max_{B\in\binom{\Z^2}{n}} d(k,B).
\end{align*}
However, without additional assumptions determining $d_{\max}(k,n)$ becomes trivial. Indeed, if a point $x\in B$ is not contained in any $k$-line of $B$, then $x$ must belong to every saturated configuration. In fact, for every $n\in\N$, we can find a $n$-subset containing no $k$-line, it implies $d_{\max}(k,n)=1$. Therefore, it is natural to consider subsets that every point is contained in some $k$-lines.

\bibliography{references}

\end{document}